\newtheorem{defi}{Definition}
\newtheorem{prop}{Proposition}
\newtheorem{lemma}{Lemma}
\newtheorem*{obs}{Remark}
\DeclareMathOperator{\perimetro}{\mathcal{P}}
\DeclareMathOperator{\HM}{\mathcal{H}}
\DeclareMathOperator{\spt}{spt}
\DeclareMathOperator{\supp}{supp}
\newcommand{\R}{\mathbb{R}}
\newcommand{\cluster}[1]{\mathcal{#1}}
\newcommand{\chamber}[2]{\cluster{#1}(#2)}
\newcommand{\intface}[3]{\cluster{#1}(#2, #3)}
\newcommand{\vol}[1]{\textbf{Vol}_{g}\left( #1\right)}
\newcommand{\vv}[1]{\mathbf{v}_{g}\left(\cluster{#1}\right)}
\newcommand{\diff}{\operatorname{d}\!}
\newcommand{\famdif}[1]{#1_{t}}
\newcommand{\per}[2]{\perimetro_{g}\left(#1, #2\right)}
\newcommand{\peri}[1]{\perimetro_{g}\left(#1\right)}
\newcommand{\difsim}[2]{#1 \Delta #2}
\newcommand{\infimo}[2]{\inf\left\{ #1 : #2 \right\}}
\newcommand{\norm}[1]{\Vert #1 \Vert}
\newcommand{\modulo}[1]{\vert #1 \vert}
\newcommand{\euc}[1]{\mathbb{R}^{#1}}
\newcommand{\hm}{\HM_{g}^{n-1}}
\newcommand{\hmde}[1]{\hm\left(#1\right)}
\newcommand{\geoball}[2]{\textbf{B}_{M}\!\left(#1, #2\right)}
\newcommand{\ball}[2]{\textbf{B}_{g}\!\left(#1, #2\right)}
\newcommand{\sphere}[1]{\mathbb{S}^{#1}}
\newcommand{\frored}[1]{\partial^{\ast}#1}
\newcommand{\cc}{\Subset}
\newcommand{\prodin}[2]{#1\cdot #2}
\title[\resizebox{4.5in}{!}{On Clusters and the multi-isoperimetric profile in Riemannian manifolds with bounded geometry}]{On Clusters and the multi-isoperimetric profile in Riemannian manifolds with bounded geometry}
\begin{document}
      \maketitle 
      \begin{center}
            \author{REINALDO RESENDE DE OLIVEIRA}\footnote{\emph{resende.reinaldo.oliveira@gmail.com - Instituto de Matem\'atica e Estat\'istica - Universidade de S\~ao Paulo - Brasil}}

\vspace{0.7cm}

\noindent {\sc abstract}. For a complete Riemannian manifold with bounded geometry, we prove the existence of isoperimetric clusters and also the compactness theorem for sequence of clusters in a larger space obtained by adding finitely many limit manifolds at infinity. Moreover, we show that isoperimetric clusters are bounded. We introduce and prove the H\"{o}lder continuity of the multi-isoperimetric profile which has been explored by Emanuel Milman and Joe Neeman with a  Gaussian-weighted notion of perimeter. We yield a proof of the classical existence theorem, e.g. in space forms, for isoperimetric cluster, using the results presented here. The results in this work generalize previous works of Stefano Nardulli, Andrea Mondino, Frank Morgan, Matteo Galli and Manuel Ritor\'e from the classical Riemannian and sub-Riemannian isoperimetric problem to the context of Riemannian isoperimetric clusters and also Frank Morgan and Francesco Maggi works on clusters theory in the Euclidean setting.
\bigskip

\noindent{\it Key Words:} existence of isoperimetric clusters, multi-isoperimetric problem, minimizing clusters.
\bigskip
 
\centerline{\bf AMS subject classification: }
49Q20, 58E99, 53A10, 49Q05, 28A75.
\end{center}
      
      \tableofcontents   
      
      \newpage

\section{Introduction}\label{introduction}
In this paper, we study the existence of isoperimetric clusters for prescribed volume vector, i.e. minimizers of the perimeter functional under volume constraint, in a complete Riemannian manifold, assuming some bounded geometry conditions. The difficulty appears when the ambient manifold is noncompact since, in this case, it could happen that for a sequence of clusters with perimeter approaching the infimum, some volume may disappear at infinity and the limit of the sequence could not belong to the ambient manifold. We show that the cluster splits into a finite number of pieces (sub-clusters) that carry a positive fraction of the volume, one of them possibly staying at finite distance and the others concentrating along with divergent directions. Moreover, each of these pieces will converge to an isoperimetric cluster for its volume vector lying in some pointed limit manifold, possibly different from the original. So, isoperimetric clusters exist in this generalized sense as stated in Theorem \ref{Theorem:Existence}. The range of applications of these results is wide as it generalizes all the well known ideas carried by clusters in Euclidean spaces. We show that isoperimetric clusters are bounded in Proposition \ref{Prop:BoundednessClusters} by standard arguments used to prove the boundedness of clusters in Euclidean spaces and isoperimetric regions in Riemannian manifolds with bounded geometry. We also study properties of the multi-isoperimetric profile, defined in (\ref{multi-isoperimetric profile}), as its continuity stated in Theorem \ref{Theorem:HolderContinuity}. The theory that we will construct has its own importance and value, although we show how to apply it to prove a classical existence theorem of isoperimetric clusters in Theorem \ref{Theorem:ClassicalExistence}. The vague notions invoked in this introductory paragraph will be made clear and rigorous in the sequel.  

\subsection{Finite perimeter sets and its basic concepts}

In the remaining part of this paper we always assume that all the Riemannian manifolds $(M^n, g)$ considered are smooth with smooth Riemannian metric $g$ and $n\geq 2$. We denote by $\vol{}$ the canonical Riemannian measure induced on $M$ by $g$, and by $\hm$ the $(n-1)$-Hausdorff measure associated to the canonical Riemannian length space metric $d$ of $M$. When it is already clear from the context, explicit mention of the metric $g$ will be suppressed in what follows. First of all, we shall denote the metric ball centered at $p\in M$ of radius $r$ by $\ball{p}{r}$ and we define two concepts from Riemannian geometry.

\begin{defi} Let $(M^n, g)$ be a Riemannian manifold, the \textbf{\emph{injectivity radius}} of $M$, denoted $inj_M$, is defined as follow $$inj_M:=\inf_{p\in M}\{inj_{p,M}\},$$ where for every point $p\in M$, $inj_{p, M}$ is the injectivity radius at $p$ of $M$, i.e. the largest radius $r$ for which the exponential map $exp_p:B(0,r)\rightarrow \ball{p}{r}$ is a diffeomorphism.
\end{defi} 

The next definition places the concept of bounded geometry which will be crucial for all the theory developed in this work.

\begin{defi}\label{Def:BoundedGeometry}
A complete Riemannian manifold $(M^n, g)$, is said to have \textbf{\emph{bounded geometry}} if there exists a constant $k\in\mathbb{R}$, such that $Ric_g\geq k(n-1)$ (i.e., $Ric_g\geq k(n-1)g$ in the sense of quadratic forms) and $\vol{\geoball{p}{inj_M}}\geq v_0$ for some positive constant $v_0$, where $\geoball{p}{r}$ is the geodesic ball (or equivalently the metric ball) of $M$ centered at $p$ and of radius $r\in (0, inj_M)$.
\end{defi}

The reason to consider only manifolds with bounded geometry is explained in this introduction. Let us define the basic notions from the geometric measure theory.

\begin{defi} 
Let $(M^n,g)$ be a Riemannian manifold of dimension $n$, $U\subseteq M$ an open subset, $\mathfrak{X}_c(U)$ the set of smooth vector fields with compact support on $U$. Given a function $u\in L^{1}(M,g)$, define the {\bf \emph{total variation of $u$}} by

$$                 |Du|_{g}(M):=\sup\left\{\int_{M}u div_g(X)dv_g: X\in\mathfrak{X}_c(M), ||X||_{\infty ,g}\leq 1\right\}, $$

where $||X||_{\infty ,g}:=\sup\left\{|X_p|_{g}: p\in M\right\}$ and $|X_p|_{g}$ is the norm of the vector $X_p$ in the metric $g$ on $T_pM$.
      We say that a function $u\in L^{1}(M,g)$, has \textbf{\emph{bounded variation}}, if $|Du|_{g}(M)<+\infty$ and we define the set of all functions of bounded variations on $(M^n, g)$ by $BV(M,g):=\{u\in L^1(M,g):\:|Du|_{g}(M)<+\infty\}$.
\end{defi}

\begin{defi}
A function $u \in L^1_{loc}(M)$ has \textbf{\emph{locally bounded variation in $(M^n, g)$}}, if
for each open set $U \subset\subset M$, 
$$|Du|_{g}(U):=\sup\left\{ \int_U u div_g(X)dv_g : X\in\mathfrak X_c(U), \|X\|_{\infty,g}\le1  \right\}<+\infty,$$
and we define the set of all functions of locally bounded variations on $(M^n, g)$ by $BV_{loc}(M, g) := \{u \in L^1_{loc}(M, g) : |Du|_{g}(U) < +\infty, U\subset\subset M\}$. So for any $u\in BV(M,g)$, we can associate a vector Radon measure on $(M^n, g)$, denoted $Du$ with total variation $|Du|_g$.
\end{defi}

Given $E\subset M$ measurable with respect to the canonical Riemannian measure and $U\subset M$ an open subset, the \textbf{perimeter of $E$ in $U$}, $ \mathcal{P}_g(E, U)\in [0,+\infty]$, is

\begin{equation}\label{Def:Perimeter}
\mathcal{P}_g(E, U):= |D\chi_{E}|_{g}(U) = \sup\left\{\int_{U}\chi_E div_g(X)dv_g: X\in\mathfrak{X}_c(U), ||X||_{\infty ,g}\leq 1\right\}.
\end{equation} 
      
If $\mathcal{P}_g(E, U)<+\infty$ for every open set $U\subset\subset M$, we call $E$ a \textbf{locally finite perimeter set}. Let us set $\mathcal{P}_g(E):=\mathcal{P}_g(E, M)$. Finally, if $\mathcal{P}_g(E)<+\infty$ we say that \textbf{$E$ is a set of finite perimeter}.

In the case that the boundary of the set $E$ is not smooth, the topological boundary $\partial E$ of a set of finite perimeter is not a good candidate to measure the perimeter because its Hausdorff measure exceeds, in general, such value. The correct boundary in this context is the reduced boundary introduced by Ennio De Giorgi whose definition we recall below.

\begin{defi}  
The \textbf{\emph{reduced boundary}} of a set of finite perimeter $E$ of $(M^n, g)$, denoted by $\frored{E}$ is defined as the collection of points $p$ at which the
limit
$$
\nu_{E,g}(p) := \lim _{r\to 0} \frac{D^g \chi_{E}\left(\geoball{p}{r}\right)}{\left|D^g \chi_{E}\right|\left(\geoball{p}{r}\right)}
$$
exists and has length equal to one, i.e.
$$
\left|\nu_{E, g}(p)\right|_{g}=1.
$$
The function \(\nu_{E, g} : \partial^*E \rightarrow\mathbb{S}^{n-1}\) is called the \textbf{\emph{generalized inner normal to \(E\)}}.
\end{defi}

\begin{obs} By the very definition, $\frored{E}$ could depend on the metric $g$, but in fact it can be show that it does not depend on the choice of the metric $g$. To see this is a straightforward argument.
\end{obs}

By standard results of the theory of sets of finite perimeter, we have that $\peri{E, F} = \hmde{\frored{E}\cap F}$ where $\frored{E}$ is the reduced boundary of $E$ and $F\subset M$ is any Borel set. In particular, if $E$ has smooth boundary, then $\frored{E}=\partial E$, where $\partial E$ is the topological boundary of $E$. In the sequel, we will not distinguish between the topological boundary and reduced boundary when no confusion can arise.

\begin{defi}\label{Def:IsPWeak}
Let $(M^n, g)$ be a Riemannian manifold. We denote by $\tilde{\tau}_M$ the set of  finite perimeter subsets of $(M^n, g)$. The function $\tilde{I}_M:[0,\vol{M})\rightarrow [0,+\infty )$  defined by 
     $$\tilde{I}_M(v):= \inf\{\peri{\Omega}: \Omega\in \tilde{\tau}_M, \vol{\Omega}=v \}$$ 
is called the \textbf{\emph{isoperimetric profile function}} $($or shortly the \textbf{\emph{isoperimetric profile}}$)$ of the Riemannian manifold $(M^n, g)$. If there exist a finite perimeter set $\Omega\in\tilde{\tau}_M$ satisfying $\vol{\Omega}=v$, $\tilde{I}_M(\vol{\Omega})= \peri{\Omega}$ such an $\Omega$ will be called an \textbf{\emph{isoperimetric region}}, and we say that $\tilde{I}_M(v)$ is \textbf{\emph{achieved}}. 
\end{defi} 

For further results on the isoperimetric profile, one can consult \cite{pittet2000isoperimetric}, \cite{bayle2004differential}, \cite{nardulli2015discontinuous}, \cite{flores2014continuity}, etc.		

\begin{defi}\label{Def:L1Loc}
We say that a sequence of finite perimeter sets $E_j$ \textbf{\emph{locally converges to another finite perimeter set $E$}}, and we denote this by writing $E_j \stackrel{loc}{\rightarrow} E$, if $\chi_{E_j}\rightarrow\chi_{E}$ in $L^1_{loc}(M, g)$, i.e. if $\vol{\difsim{E_j}{E}\cap U}\rightarrow 0$, for all $U\subset\subset M$. Here $\chi_{E}$ means the characteristic function of the set $E$ and the notation $U\subset\subset M$ means that $U\subseteq M$ is open and $\overline{U}$ (the topological closure of $U$) is compact in $M$.
\end{defi}  

\begin{defi}\label{Def:ConvInTheSenseFPSets}
We say that a sequence of finite perimeter sets $E_j$ \textbf{\emph{converge in the sense of finite perimeter sets}} to another finite perimeter set $E$ if $E_j \stackrel{loc}{\rightarrow} E$ and 
\begin{eqnarray*} 
                           \lim_{j\rightarrow+\infty}\peri{E_j}=\peri{E}.
\end{eqnarray*}
\end{defi}

For a more detailed discussion on locally finite perimeter sets and functions of bounded variation, one can consult  \cite{MPPP} to the Riemannian setting and \cite{MorGMT}, \cite{MaggiBook2012}, \cite{Giu} and \cite{ambrosio2000functions} to the Euclidean setting.

\subsection{On $C^{m, \alpha}$-convergence of manifolds and bounded geometry}

Now, let us recall the basic definitions from the theory of convergence of manifolds, as exposed in \cite{Pet}. This will help us to state the main result in a precise way. 

\begin{defi}
For any $m\in\mathbb{N}$, $\alpha\in [0, 1]$, a sequence of pointed smooth complete Riemannian manifolds is said to \textbf{\emph{converge in
the pointed $C^{m,\alpha}$, respectively $C^{m}$, topology to a smooth manifold $M$}} (denoted by $(M_i, g_i, p_i)\rightarrow (M,g,p)$), if for every $R > 0$ we can find a domain $\Omega_R$ with $B(p,R)\subseteq\Omega_R\subseteq M$, a natural number $\nu_R\in\mathbb{N}$, and $C^{m+1}$ embeddings $F_{i,R}:\Omega_R\rightarrow M_i$, for large $i\geq\nu_R$ such that $F_{i, R}(p) = p_i, B(p_i,R)\subseteq F_{i,R} (\Omega_R)$ and $F_{i,R}^*(g_i)\rightarrow g$ on $\Omega_R$ in the $C^{m,\alpha}$, respectively $C^m$ topology. 
\end{defi}

\begin{defi}
We say that a sequence of multipointed Riemannian manifolds $(M_i, g_i, p_{1i}, \cdots, p_{ji}, \cdots)$ \textbf{\emph{converges to the multipointed Riemannian manifold $(M, g, p_{1}, \cdots, p_{j}, \cdots)$ in the multipointed $C^0$-topology}}, if for every $j$ we have

$$(M_i, g_i, p_{ji}) \to (M, g,  p_{j})  $$ 

in the pointed $C^0$-topology.
\end{defi}

It is easy to see that this type of convergence implies pointed Gromov-Hausdorff convergence. We now define the notion of norm of sets in a manifold which is the \textbf{$C^{m,\alpha}$-norm at scale $r$}. This notion can be taken as a possible definition of bounded geometry. 

\begin{defi}[\cite{Pet}]\label{Def:BoundedGeometryPetersen}
A subset $A$ of a Riemannian n-manifold $M$  has \textbf{\emph{bounded $C^{m,\alpha}$ norm on the scale of $r$}}, $||A||_{C^{m,\alpha},r}\leq Q$, if every point p of M lies in an open set $U$ with a chart $\psi$ from the Euclidean $r$-ball into $U$ such that
\begin{enumerate}[(i):]
      \item $|D\psi|\leq e^Q$ on $B(0,r)$ and $|D\psi^{-1}|\leq e^Q$ on $U$,
      \item $r^{|j|+\alpha}||D^jg||_{\alpha}\leq Q$ for all multi indices j with $0\leq |j|\leq m$, where $g$ is the matrix of functions of metric coefficients in the $\psi$ coordinates regarded as a matrix on $B(0,r)$. 
\end{enumerate} 
We define the set $\mathcal{M}^{m,\alpha}(n, Q, r)$ as the set of pointed manifolds $(M,g,p)$ which satisfies $||M||_{C^{m,\alpha},r}\leq Q$. 
\end{defi}

In the sequel, unless otherwise specified, we will make use of the technical assumption on $(M,g,p)\in\mathcal{M}^{m,\alpha}(n, Q, r)$ that $n\geq 2$, $r,Q>0$, $m\geq 1$, $\alpha\in ]0,1]$.  Roughly speaking, $r>0$ is a positive lower bound on the injectivity radius of $M$, i.e. $inj_M>C(n,Q,\alpha,r)$. 

In general, a lower bound on $Ric_M$ and on the volume of unit balls, i.e. the bounded geometry requirements (Definition \ref{Def:BoundedGeometry}), does not ensure that the pointed limit metric spaces at infinity are still manifolds, this motivates the following definition.

\begin{defi}\label{Def:BoundedGeometryInfinity}
We say that a smooth Riemannian manifold $(M^n, g)$ has \textbf{\emph{$C^{m,\alpha}$-bounded geometry}} if it is of bounded geometry and if for every diverging sequence of points $(p_j)$, there exist a subsequence $(p_{{j}_{l}})$ and a pointed smooth manifold $(M_{\infty}, g_{\infty}, p_{\infty})$ with $g_{\infty}$ of class $C^{m,\alpha}$ such that the sequence of pointed manifolds $(M, g, p_{{j}_{l}})\rightarrow (M_{\infty}, g_{\infty},  p_{\infty})$, in  $C^{m,\alpha}$-topology.  
\end{defi}

We observe here that Definition \ref{Def:BoundedGeometryInfinity} is weaker than Definition \ref{Def:BoundedGeometryPetersen}. In fact, using Theorem $72$ of \cite{Pet}, one can show that if a manifold $M$ has bounded $C^{m,\alpha}$ norm on the scale of $r$ for $\alpha>0$ in the sense of Definition \ref{Def:BoundedGeometryPetersen} then $M$ has $C^{m,\alpha}$-bounded geometry in the sense of Definition \ref{Def:BoundedGeometryInfinity}, while in general the converse is not true.  

In the absence of the extra condition of Definition \ref{Def:BoundedGeometryInfinity}, just assuming bounded geometry in the sense of Definition \ref{Def:BoundedGeometry}, the resulting limit space is merely a length space $(Y,d_Y,y)$. For more on the structure of these limit spaces, one can consult for example the works of Cheeger-Colding (\cite{ChCold1}, \cite{ChCold2}, \cite{ChCold3}). 
Regarding the smooth structure of limit spaces $(Y, d_Y, y)$, the reader is referred to Cheeger-Anderson \cite{AndChe92}, Anderson \cite{An92}, or \cite{Pet} chapter 10 for a more expository discussion.

\subsection{Clusters}

The definition of clusters that we adopt is motivated by the definition given by Francesco Maggi in the Euclidean setting in \cite{MaggiBook2012}. For other references in cluster theory in the Euclidean setting, one can consult \cite{MorGMT} and \cite{Almgren76Memoirs}.

\begin{defi}\label{Def:Clusters} Let $(M^n,g)$ be a Riemannian manifold of dimension $n$. An \textbf{\emph{N-cluster}} $\mathcal{E}$ of $(M^n, g)$ is a finite family of sets of finite perimeter $\mathcal{E}:=\left\{ \mathcal{E}(h)\right\}^N_{h=1}$, $N \in \mathbb{N}$, $N \geq 1$, with 
\[
0 < \vol{\mathcal{E}(h)}< +\infty, \hspace{0.9cm} 1 \leq h \leq N,\]
\[
\vol{\mathcal{E}(h) \cap \mathcal{E}(k)} = 0, \hspace{0.7cm} 1 \leq h < k \leq N.
\]
The sets $\mathcal{E}(h)$ are called the {\bf \emph{chambers}} of $\mathcal{E}$. When the number $N$ of the chambers of $\mathcal{E}$ is clear from the context, we shall use the term "cluster" in place of "N-cluster". The {\bf \emph{exterior chamber}} of $\mathcal{E}$ is defined as 

$$ \mathcal{E}(0) = M^n\setminus\bigcup^N_{h=1} \mathcal{E}(h). $$

In, particular, $\left\{ \mathcal{E}(h)\right\}^N_{h=0}$ is a partition of $M^n$ (up to a set of null volume). The \textbf{\emph{volume vector}} $\vv{E}$ is defined as 
\[
\vv{E}= \left( \vol{\chamber{E}{1}}, \ldots , \vol{\chamber{E}{N}} \right) \in\mathbb{R}^N. 
\]

We let $\mathbb{R}^N_{+}$ be the set of those $\v\in \mathbb{R}^N$ such that $\v(h) > 0$ (the $h$-th component of a vector $\v$) for every $h = 1, \ldots , N.$ Notice that if $\mathcal{E}$ is an N-cluster, then $\vv{E} \in \mathbb{R}^N_{+}=[0,\vol{M}[^N$ as $\vv{E}(h) = \vol{\chamber{E}{h}} > 0$ for every $h=1, \ldots,N.$
\end{defi}

\begin{obs} It is important to notice that the chambers of a cluster are not assumed to be indecomposable, it is known that indecomposability is the commonly accepted notion of connectedness in the framework of sets of finite perimeter.
\end{obs} 

\begin{defi}
The {\bf \emph{interfaces}} of the N-cluster $\mathcal{E}$ in $(M^n, g)$ are the $\hm$-rectifiable sets
\[\mathcal{E}(h,k) = \frored{\chamber{E}{h}}\cap \frored{\chamber{E}{k}}, \hspace{0.4cm} 0\leq h,k \leq N, h \neq k. \]

We define the {\bf \emph{relative perimeter of $\cluster{E}$ in $F \subset M^n$}} as 
\begin{equation}\label{relative-perimeter}
\per{\cluster{E}}{F} = \sum_{0\leq h<k\leq N}\hmde{F\cap\intface{E}{h}{k})},
\end{equation}
where $F$ is any Borelian set in $(M^n, g)$. The {\bf \emph{perimeter of $\cluster{E}$}} is denoted $\perimetro(\cluster{E}) \doteq \per{\cluster{E}}{M}$.
\end{defi}

\begin{defi}
The \textbf{\emph{flat distance or flat norm in $F \subset M^n$ of two N-clusters $\cluster{E}$ and $\cluster{E'}$ of $(M^n, g)$}} is defined as
\[ d_{\mathcal{F}, g}^F(\cluster{E},\cluster{E'}):=\sum^N_{h=1} \vol{F \cap (\difsim{\chamber{E}{h}}{\chamber{E'}{h}})}.\]

We set $d_{\mathcal{F}, g}(\mathcal{E},\mathcal{E}') = d_{\mathcal{F}, g}^{M^n}(\mathcal{E},\mathcal{E}')$. With this notation at hand, we say that a sequence of N-clusters $\left\{ \cluster{E}_k \right\}_{k \in \mathbb{N}}$ in $(M^n, g)$ {\bf \emph{locally converges to $\cluster{E}$}}, and write $\cluster{E}_k \stackrel{loc}{\rightarrow} \cluster{E}$, if for every compact set $K \subset M^n$ we have $d_{\mathcal{F}, g}^K(\cluster{E},\cluster{E}_k)\to 0$ as $k \rightarrow +\infty$. If $d_{\mathcal{F}, g}(\cluster{E}, \cluster{E}_k) \to 0$ as $k\to+\infty$, we say that $\cluster{E}_k$ {\bf \emph{converges to $\cluster{E}$}} and we denote $\cluster{E}_k \to \cluster{E}$.
\end{defi}

In order to simplify our formula for the relative perimeter, we will prove the following result which  interestingly makes easy to prove the lower semicontinuity for sequences of clusters. In (\ref{relative-perimeter}), we have the problem of working with the interfaces of the clusters which can be a tough task, since the intersection of the reduced boundary is not the same set of the reduced boundary of the intersection. To avoid this kind of problem, the formula provided by the following proposition turns out to be one of the key ideas to work with clusters in the way that it permits us to work with the perimeter of each chamber \emph{separately} in a sum, as shown in the following proposition which is the Riemannian counterpart of Proposition 29.4 of \cite{MaggiBook2012}.

\begin{prop}\label{EquationToPerimeter} If $\mathcal{E}$ is an N-cluster in $(M^n, g)$, then for every $F \subset M^n$ we have
\begin{equation}\label{perimeter-formula}
\P_g(\mathcal{E};F) = \frac{1}{2} \sum_{h=0}^{N}\P_g(\mathcal{E}(h);F).
\end{equation}
In particular, if A is open in $M^n$ and $\mathcal{E}_k \stackrel{loc}{\rightarrow} \mathcal{E},$ then
\begin{equation}\label{lsc}
\P_g(\mathcal{E};A) \leq \liminf_{k \to +\infty}\P_g(\mathcal{E}_k;A).
\end{equation}
\end{prop}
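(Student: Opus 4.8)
The plan is to establish the pointwise identity \eqref{perimeter-formula} first, and then deduce the lower semicontinuity \eqref{lsc} from it by combining the known lower semicontinuity of the perimeter of a single set of finite perimeter (under $L^1_{loc}$ convergence) with the additivity structure of the right-hand side. For the identity, the essential geometric fact is that for a partition $\{\mathcal{E}(h)\}_{h=0}^N$ of $M^n$ up to null volume, the reduced boundary of each chamber decomposes, up to an $\hm$-null set, into the interfaces it shares with the other chambers: $\frored{\mathcal{E}(h)} = \bigcup_{k\neq h}\mathcal{E}(h,k)$ modulo $\hm$-negligible sets. This is a local statement, so I would verify it in a coordinate chart using the bounded geometry assumption to pass to a Euclidean-type setting, or more directly invoke the Riemannian version of the theory of sets of finite perimeter (as in \cite{MPPP}): at $\hm$-a.e.\ point $p\in\frored{\mathcal{E}(h)}$ the set $\mathcal{E}(h)$ has density $1/2$ and, since the chambers partition $M^n$, exactly one other chamber $\mathcal{E}(k)$ has density $1/2$ at $p$ while all remaining chambers have density $0$; hence $p\in\mathcal{E}(h,k)$. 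Conversely, the interfaces are contained in the reduced boundaries by definition. The measure-theoretic content is that the ``triple points'' (points where three or more chambers meet with positive density, or where the density is not $0$, $1/2$, or $1$) form an $\hm$-null set, which follows from Federer's theorem on the structure of the reduced boundary applied chamber by chamber.

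Granting this decomposition, I would compute, for any Borel set $F$,
\[
\sum_{h=0}^N \P_g(\mathcal{E}(h);F) = \sum_{h=0}^N \hmde{F\cap\frored{\mathcal{E}(h)}} = \sum_{h=0}^N \sum_{\substack{k=0 \\ k\neq h}}^N \hmde{F\cap\mathcal{E}(h,k)},
\]
where the last step uses that the sets $\{F\cap\mathcal{E}(h,k)\}_{k\neq h}$ are pairwise $\hm$-disjoint up to null sets (again a consequence of the density dichotomy). The double sum counts each unordered pair $\{h,k\}$ exactly twice, so it equals $2\sum_{0\le h<k\le N}\hmde{F\cap\mathcal{E}(h,k)} = 2\,\P_g(\mathcal{E};F)$ by the definition \eqref{relative-perimeter}. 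Dividing by $2$ gives \eqref{perimeter-formula}.

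For \eqref{lsc}, suppose $A$ is open and $\mathcal{E}_k \stackrel{loc}{\to}\mathcal{E}$. By definition of local convergence of clusters, $d_{\mathcal{F},g}^K(\mathcal{E},\mathcal{E}_k)\to 0$ for every compact $K$, which means $\chi_{\mathcal{E}_k(h)}\to\chi_{\mathcal{E}(h)}$ in $L^1_{loc}(M,g)$ for each $h=1,\dots,N$; since the exterior chamber is the complement of the union, $\chi_{\mathcal{E}_k(0)}\to\chi_{\mathcal{E}(0)}$ in $L^1_{loc}$ as well. The classical lower semicontinuity of perimeter under $L^1_{loc}$ convergence on the open set $A$ then gives $\P_g(\mathcal{E}(h);A)\le\liminf_k\P_g(\mathcal{E}_k(h);A)$ for each $h$. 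Summing over $h=0,\dots,N$ and using superadditivity of $\liminf$,
\[
\sum_{h=0}^N\P_g(\mathcal{E}(h);A) \le \sum_{h=0}^N\liminf_{k\to+\infty}\P_g(\mathcal{E}_k(h);A) \le \liminf_{k\to+\infty}\sum_{h=0}^N\P_g(\mathcal{E}_k(h);A),
\]
and applying \eqref{perimeter-formula} to both the leftmost and rightmost expressions yields $\P_g(\mathcal{E};A)\le\liminf_k\P_g(\mathcal{E}_k;A)$, as desired.

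The main obstacle is the decomposition of $\frored{\mathcal{E}(h)}$ into interfaces up to $\hm$-null sets, i.e.\ controlling the set of higher-order junction points. In the Euclidean case this is Maggi's Proposition 29.4 and rests on Federer's structure theorem; transplanting it to the Riemannian setting requires either working in normal coordinates (where bounded geometry ensures uniform control of the metric, so the Euclidean density computations transfer with harmless distortion constants) or citing the corresponding Riemannian structure theory from \cite{MPPP}. I expect this to be routine but it is the only non-formal point; everything after it is bookkeeping with the additivity of $\hm$ and the standard semicontinuity theorem.
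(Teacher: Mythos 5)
Your proposal is correct, and it is essentially the argument one would expect: the paper does not actually include a proof of Proposition~\ref{EquationToPerimeter} (it only remarks that it is ``the Riemannian counterpart of Proposition~29.4 of \cite{MaggiBook2012}''), so there is no written proof in the paper to compare against, and your reconstruction is a faithful adaptation of Maggi's Euclidean argument. The chain is exactly the expected one: Federer's theorem applied to each chamber shows that at $\hm$-a.e.\ point each $\mathcal{E}(h)$ has density in $\{0,1/2,1\}$, the partition property forces the densities to sum to $1$, so $\hm$-a.e.\ point of $\frored{\mathcal{E}(h)}$ lies in exactly one interface $\mathcal{E}(h,k)$ (using Federer once more to pass from $\partial^e\mathcal{E}(k)$ to $\frored{\mathcal{E}(k)}$ up to a null set), and the factor $1/2$ then comes from the obvious double counting of unordered pairs; the lower semicontinuity then follows from the single-set lower semicontinuity plus superadditivity of $\liminf$. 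One small remark: you appeal to bounded geometry when justifying the Riemannian version of Federer's structure theorem, but this is unnecessary for this proposition (which makes no bounded-geometry hypothesis) --- Federer's theorem and the density characterization of the essential boundary are purely local statements that hold on any smooth Riemannian manifold by working in a chart; bounded geometry plays no role here. Also, when deducing that the exterior chamber converges, it is worth stating explicitly the inclusion $\difsim{\mathcal{E}_k(0)}{\mathcal{E}(0)}\subseteq\bigcup_{h=1}^N\difsim{\mathcal{E}_k(h)}{\mathcal{E}(h)}$, since the flat distance in the paper's definition sums only over $h=1,\dots,N$. With those two small polish points, the argument is complete.
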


For completeness, let us state the classical compactness criterion for a sequence of clusters which are contained in some fixed ball. The assumptions of the sequence of clusters be subset of a compact manifold (the ball they are contained in) is crucial for this compactness result, since we can use a simple tool that is the compactness criterion for {\bf BV} functions which prove the criterion almost automatically. Without this assumptions, the problem of showing the existence of a "limit cluster" turns out to be quite hard and in general for a complete Riemannian manifold this result is no longer true, because, as it is well known, some part with positive volume of the sequence of clusters could disappear at infinity.

\begin{prop}[Compactness criterion for clusters]\label{Prop:ClassicalClusterCompactness} If $\left\{ \mathcal{E}_k \right\}_{k \in \mathbb{N}}$ is a sequence of N-clusters in $(M^n, g)$,

$$ \sup_{k \in \mathbb{N}}\peri{\mathcal{E}_k} < +\infty,$$
$$\inf_{k \in \mathbb{N}} \min_{1 \leq h \leq N} \vol{\mathcal{E}_k(h)} > 0$$
and
$$ \mathcal{E}_k(h) \subset \ball{p}{R}, \hspace{0.2cm} \forall k \in \mathbb{N}, h = 1, \ldots, N,$$

$R>0,$ for some $p\in M$, then there exist an N-cluster $\mathcal{E}$ in $(M^n, g)$ with $\chamber{E}{h}\subset \ball{p}{R}$ such that up to a subsequence $\cluster{E}_k\to\cluster{E}$ as $k \longrightarrow +\infty.$
\end{prop}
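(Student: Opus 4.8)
The plan is to reduce the statement to the compactness theorem for functions of bounded variation on a precompact domain, applied to each chamber separately. Since $(M^n,g)$ is complete, $\overline{\ball{p}{R}}$ is compact by Hopf--Rinow, so I would fix the auxiliary open set $U:=\ball{p}{R+1}$, which is precompact and satisfies $\ball{p}{R}\Subset U$. For each fixed $h\in\{1,\dots,N\}$ the functions $\chi_{\mathcal{E}_k(h)}$ are supported in $\ball{p}{R}$, hence belong to $BV(U)$: their $L^1(U)$-norms equal $\vol{\mathcal{E}_k(h)}\le\vol{\ball{p}{R}}<+\infty$, and by the perimeter formula (\ref{perimeter-formula}) with $F=M$ (all summands being nonnegative) their total variations satisfy $|D\chi_{\mathcal{E}_k(h)}|_{g}(U)=\per{\mathcal{E}_k(h)}{U}\le\peri{\mathcal{E}_k(h)}\le 2\peri{\mathcal{E}_k}\le 2\sup_{k\in\mathbb{N}}\peri{\mathcal{E}_k}<+\infty$. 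Thus $\{\chi_{\mathcal{E}_k(h)}\}_{k\in\mathbb{N}}$ is bounded in $BV(U)$.

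Next I would invoke the compactness theorem for $BV$ functions on the precompact set $U$ (see \cite{MPPP} for the Riemannian formulation): up to a subsequence, $\chi_{\mathcal{E}_k(h)}\to u_h$ strongly in $L^1(U)$ and pointwise $v_g$-a.e. Running this for $h=1,\dots,N$ and passing to a diagonal subsequence, I may assume the convergence holds simultaneously for all $N$ chambers. Each limit $u_h$ takes values in $\{0,1\}$, so $u_h=\chi_{\mathcal{E}(h)}$ for a measurable set $\mathcal{E}(h)$; moreover $\chi_{\mathcal{E}_k(h)}$ vanishes $v_g$-a.e. outside $\ball{p}{R}$, hence so does $u_h$, which gives $\vol{\mathcal{E}(h)\setminus\ball{p}{R}}=0$, so (modifying on a null set if necessary) $\mathcal{E}(h)\subset\ball{p}{R}$.

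It then remains to verify that $\mathcal{E}:=\{\mathcal{E}(h)\}_{h=1}^N$ is an $N$-cluster and that $\mathcal{E}_k\to\mathcal{E}$. The $L^1(U)$-convergence gives $\vol{\mathcal{E}(h)}=\lim_k\vol{\mathcal{E}_k(h)}$, so $\vol{\mathcal{E}(h)}\ge\inf_{k\in\mathbb{N}}\min_{1\le h\le N}\vol{\mathcal{E}_k(h)}>0$ and $\vol{\mathcal{E}(h)}\le\vol{\ball{p}{R}}<+\infty$; the pointwise convergence gives $\chi_{\mathcal{E}(h)}\chi_{\mathcal{E}(h')}=\lim_k\chi_{\mathcal{E}_k(h)}\chi_{\mathcal{E}_k(h')}=0$ a.e. for $h\ne h'$, that is $\vol{\mathcal{E}(h)\cap\mathcal{E}(h')}=0$; and the lower semicontinuity of the perimeter under $L^1_{loc}$-convergence (immediate from (\ref{Def:Perimeter}), or from (\ref{lsc}) applied to the $1$-clusters $\{\mathcal{E}_k(h)\}$) yields $\per{\mathcal{E}(h)}{U}\le\liminf_k\per{\mathcal{E}_k(h)}{U}<+\infty$, which equals $\peri{\mathcal{E}(h)}$ because $\frored{\mathcal{E}(h)}\subset\overline{\ball{p}{R}}\subset U$. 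Hence $\mathcal{E}$ is an $N$-cluster with chambers in $\ball{p}{R}$. Finally, since every chamber of $\mathcal{E}_k$ and of $\mathcal{E}$ sits in $\ball{p}{R}\subset U$,
\[
d_{\mathcal{F}, g}(\mathcal{E}_k,\mathcal{E})=\sum_{h=1}^N\vol{\difsim{\mathcal{E}_k(h)}{\mathcal{E}(h)}}=\sum_{h=1}^N\int_U|\chi_{\mathcal{E}_k(h)}-\chi_{\mathcal{E}(h)}|\,dv_g\longrightarrow 0
\]
by strong $L^1(U)$-convergence, so in fact $\mathcal{E}_k\to\mathcal{E}$ in the global flat norm, not merely locally.

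I do not expect a genuine obstacle: this is the Riemannian counterpart of the Euclidean compactness criterion for clusters in \cite{MaggiBook2012}, and once the $BV$ compactness theorem on $U$ is at hand no further estimates are needed. The two points requiring a little care — and the only places where the hypotheses are truly used — are that the perimeter computed in the auxiliary precompact domain $U$ agrees with the perimeter in $M$ (true because every reduced boundary in play stays inside $\overline{\ball{p}{R}}$, by completeness together with the containment hypothesis), and that the uniform lower bound $\inf_{k\in\mathbb{N}}\min_{1\le h\le N}\vol{\mathcal{E}_k(h)}>0$ is precisely what prevents a chamber from collapsing to a null set in the limit, so that $\mathcal{E}$ is a bona fide $N$-cluster rather than a cluster with fewer chambers.
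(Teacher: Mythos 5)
Your proof is correct and follows exactly the line the paper intends: the paper simply says the argument "goes along the same lines of the Euclidean proof" in Maggi (Proposition 29.5), and what you have written out is precisely that argument transplanted to the Riemannian setting, reducing to $BV$ compactness on the precompact set $U$, using the perimeter formula of Proposition \ref{EquationToPerimeter} for the bound $\peri{\mathcal{E}_k(h)}\le 2\peri{\mathcal{E}_k}$, and extracting a diagonal subsequence. No gap.
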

\begin{proof}
The proof goes along the same lines of the Euclidean proof, see Proposition 29.5 in \cite{MaggiBook2012}.
\end{proof}

\begin{obs}[Density properties at interfaces] If $\mathcal{E}$ is a N-cluster in $(M^n, g)$, $p \in \mathcal{E}(h,k)$, $0 \leq h < k \leq N, j\neq h, k,$ then we get

\begin{equation}\label{normalinterfaces}
\nu_{\mathcal{E}(h)}(p) = - \nu_{\mathcal{E}(k)}(p),
\end{equation}
\begin{equation}\label{density0}
\theta_n (\mathcal{E}(j))(p) = 0,
\end{equation}
\begin{equation}\label{density0'}
\theta_{n-1}(\partial^{\ast}\mathcal{E}(j))(p) = 0.
\end{equation}

\end{obs}

We set the following notations that were used in the previous remark:

\[ \theta_n(E)(p) = \lim_{r\to 0}\frac{\vol{E\cap\ball{p}{r}}}{\omega_n r^n}\]
and
\[ \theta_{n-1}(E)(p) = \lim_{r\to 0}\frac{\per{E}{\ball{p}{r}}}{\omega_{n-1} r^{n-1}}\]

for any finite perimeter set $E\subset M$ and $x\in M$, where $\omega_n$ is the volume of the unit ball on the Euclidean space of dimension $n$.

\begin{obs} If $\mathcal{E}$ is an N-cluster and $\Lambda \subset \left\{ 0, \ldots, N \right\},$ then 
\[
\hmde{\partial^{\ast} \left( \bigcup _{h \in \Lambda} \mathcal{E}(h) \right)\setminus\bigcup_{h \in \Lambda, k \notin \Lambda} \mathcal{E}(h,k)} = 0.
\]
\end{obs}

\subsection{Isoperimetric clusters}

The main goal of this paper is to prove the existence of isoperimetric clusters in a generalized sense. An {\bf isoperimetric cluster for volume $\v\in\mathbb{R}^{N}_{+}$} is an N-cluster $\cluster{E}$ that solves the minimizing problem below which is also known as {\bf multi-isoperimetric problem}, i.e. such that $\vv{E} = \v$ and

$$ \peri{\cluster{E}} = \infimo{\perimetro(\cluster{E'})}{\cluster{E'} \ \text{is an N-cluster with} \ \vv{\cluster{E'}} = \mathbf{v}}.$$

Similarly to the {\bf isoperimetric problem} context, i.e. $N=1$, we can define the {\bf multi-isoperimetric profile function}, or {\bf multi-isoperimetric profile}, as a function $I_M$ from $[0, \vol{M})^N$ to $[0, +\infty )$ given by

\begin{equation}\label{multi-isoperimetric profile}
I_M(\v) = \infimo{\perimetro(\cluster{E})}{\cluster{E} \ \text{is an N-cluster in} \ (M^n,g) \ \text{with} \ \vv{\cluster{E}} = \mathbf{v}}.
\end{equation}

This generalization of the isoperimetric profile for clusters has been explored in \cite{milman2018gaussian} with a Gaussian-weighted notion of perimeter which is induced by the Gaussian probability measure. These notions are all used in this work to solve and investigate the Gaussian Multi-Bubble Conjecture.

We notice that we prove the H\"{o}lder continuity of the multi-isoperimetric profile in Theorem \ref{Theorem:HolderContinuity}. If either $M$ is compact or exists a minimizing sequence contained in a compact subset of $M$, classical compactness arguments of geometric measure theory, i.e. Proposition \ref{Prop:ClassicalClusterCompactness} , combined with the direct method of the calculus of variations provide existence of isoperimetric clusters in any dimension $n$. These arguments are derivations from the theory of clusters in Euclidean spaces, for this setting we refer the reader to \cite{ColomboMaggi2017}, \cite{Mor94}, \cite{hirsch2020lower} and \cite{MaggiBook2012}.

In the case that $N=1$, we return to the classical isoperimetric problem which was extensively studied. The existence of isoperimetric regions in noncompact Riemannian manifolds is not a easy task. However, we can find papers in this directions which give pretty good answers in some specific types of manifolds. For an example, in \cite{RitGalli}, Ritor\'e and Galli proved the existence of isoperimetric regions to the case of noncompact sub-Riemannian manifolds with cocompact isometry group. For the Riemannian setting, we refer the reader to \cite{Morg1}, \cite{Ritsec}, \cite{RRosales}, \cite{RitCan}, \cite{Nar}, \cite{NarAnn}, \cite{flores2019local} and  \cite{nardulli2014generalized}. For more details on regularity theory see either \cite{Morg1} or \cite{MorGMT}. Accordingly to these references, we could extract that we need some condition on the geometry of the manifold to prove existence of isoperimetric regions which we will call {\bf bounded geometry}, defined in Definition \ref{Def:BoundedGeometry}. This condition has been studied by several mathematicians and \cite{nardulli2015discontinuous} provided a counter example of a manifold which does not satisfy one of the bounded geometry conditions and hence does not contain isoperimetric regions for some volumes. 

\subsection{Main theorems}

With the notions of multipointed $C^0$-convergence and basic clusters concepts, we can enunciate the generalized compactness theorem which assumes that the sequence of clusters has uniformly bounded perimeter and components of the volume vectors and then ensures the existence of a limit cluster in the multipointed $C^0$-topology at a union (possibly infinite) of multipointed limit manifolds.

\begin{Res}[Generalized Compactness for Sequences of Clusters]\label{Theorem:Compactness} Suppose that $(M^n,g)$ has $C^0$-bounded goemetry. Let $\{ \cluster{E}_k \}_{k\in\mathbb{N}}$ be a sequence of $N$-clusters in $(M^n, g)$ with $\mathcal{P}_g(\cluster{E}_k) \leq P$ and $\vv{E_\mathit{k}}(h) \leq \v(h)$, for $h\in\{1, ..., N\}$. Then, up to a subsequence, there exists $J\in\mathbb{N}\cup\{+\infty\}$ such that, for all $j\in\{1, ... J\}$, there exist a sequence of points $(p_{jk}^h)_{k\in\mathbb{N}}\subset M$,  a manifold $(M_{\infty}(h), g)$, $(p_{j\infty}^h)_{k\in\mathbb{N}}\subset M_{\infty}(h)$ and a finite perimeter set $\cluster{E}_{\infty}(h)\subset M_{\infty}(h)$, $1\leq h \leq N$, such that 

$$ (\cluster{E}_k(h), g, p_{jk}^h) \ \text{converges to} \ (\cluster{E}_{\infty}(h), g, p^h_{j\infty})  $$

in the multipointed $C^0$-topology. Moreover, if we define the $N$-cluster $\cluster{E}_{\infty} = \{ \cluster{E}_{\infty}(h) \}_{h=1}^{N}$ in the manifold $M\cup\left(\cup_{h=1}^{N}M_{\infty}(h)\right)$, then $\vv{E_{\infty}} = \lim_{k\to+\infty}\vv{E_\mathit{k}}$ and $\peri{\cluster{E}_{\infty}} = \lim_{k\to+\infty}\peri{\cluster{E}_k}$.
\end{Res}

As discussed in the introductory section, the main theorem of this work is the generalized existence of isoperimetric clusters, i.e. a cluster $\cluster{E}$ that possibly lives at a union of limit multipointed manifolds which satisfies $I_M(\v) = \peri{\cluster{E}}$, it is stated rigorously below.

\begin{Res}[Generalized Existence of Isoperimetric Clusters]\label{Theorem:Existence} Suppose that $(M^n,g)$ has $C^0$-bounded goemetry. Let $\{ \cluster{E}_k \}_{k\in\mathbb{N}}$ be a minimizing sequence of $N$-clusters for $\v\in\mathbb{R}^{N}_+$. Then, up to a subsequence, there exists $J\in\mathbb{N}$, a manifold $(M_{\infty}, g)$, $J$ sequences of points $(p_{jk}^h)_{k\in\mathbb{N}}\subset M$, $(p_{j\infty}^h)_{k\in\mathbb{N}}\subset M_{\infty}$ and a $N$-cluster $\cluster{E}_{\infty}$ in $(M_{\infty}, g)$ such that 

$$ (\cluster{E}_k(h), g, p_{jk}^h) \ \text{converges to} \ (\cluster{E}_{\infty}(h), g, p^h_{j\infty}),  $$

for $h\in\{1, ..., N\}$, in the multipointed $C^0$-topology. Moreover, $\vv{E_{\infty}} = \v$ and $\peri{\cluster{E}_{\infty}} = I_{M_{\infty}}(\v) = I_{M}(\v) $.
\end{Res}

We also show in the next result that isoperimetric clusters are always bounded which is the analogous of Theorem 3 of \cite{nardulli2014generalized} to the context of clusters.

\begin{Res}[Boundedness of Isoperimetric Clusters]\label{Prop:BoundednessClusters}
Let $(M^n, g)$ be a Riemannian manifold with bounded geometry, then isoperimetric clusters are bounded.
\end{Res}

Since the results that we have been working with are supposed to generalize the classical existence and compactness results, it is natural to provide a proof of the classical result statement applying the previous results of this paper.

\begin{Res}[Classical Existence of Isoperimetric Clusters]\label{Theorem:ClassicalExistence}
Let $(M^n, g)$ be $C^0$-locally asymptotically the $n$-dimensional space form $\mathbb{M}_{k}^n$ of curvature $k$, $Ric_g \geq k(n-1)$ $($i.e. $Ric_g \geq k(n-1)g$ in the sense of quadratic forms$)$. Then, for every $\v\in\mathbb{R}^{N}_{+}$, there exist an isoperimetric cluster, i.e. an $N$-cluster $\cluster{E}$ with 

$$ I_M(\v) = \peri{\cluster{E}}. $$
\end{Res}

Now, we state the H{\"o}lder continuity of the multi-isoperimetric profile.

\begin{Res}[Local H{\"o}lder continuity of the multi-isoperimetric profile]\label{Theorem:HolderContinuity}
Let $(M^n, g)$ be a manifold with bounded geometry. Then there exists a constant $C(n, k) > 0$ such that for every $\v, \v' \in ]0, \vol{M}[^{N}$ satisfying $\v' \in\textbf{B}_{\mathbb{R}^{N}}\left( \v , R_\v\right)$, where 

$$ R_{\v} = \frac{1}{C(n, k)}\min\biggl \{v_0, \sum_{h=1}^{N}\biggl( \frac{\v(h)}{I_M(\v) + C(n, k)} \biggr)^n \biggr \},$$

we have that 

$$ \modulo{ I_M(\v) - I_M(\v')} \leq C(n, k) \biggl( \frac{\modulo{\v - \v'}}{v_0} \biggr)^{\frac{n-1}{n}}. $$

\end{Res}

\subsection{Plan of the article}
\begin{enumerate}
           \item  Section \ref{introduction} constitutes the introduction of the basic concepts, the contextualization of the problem, a basic notion of manifold's convergence theory and the statements of the main results.
           \item In Section \ref{proof-main-theorem}, we prove the main theorem (Theorem \ref{Theorem:Existence}), the generalized compactness theorem (Theorem \ref{Theorem:Compactness}) and also the H{\"o}lder continuity of the multi-isoperimetric profile (Theorem \ref{Theorem:HolderContinuity}). Moreover,  we verify that isoperimetric clusters are bounded (Theorem \ref{Prop:BoundednessClusters}).
           \item In Section \ref{application-classical-results}, we show how the generalized existence theorem (Theorem \ref{Theorem:Existence}) is used to prove classical existence theorems for isoperimetric clusters (Theorem \ref{Theorem:ClassicalExistence}).
           \item In the Appendix (Section \ref{appendix}), we furnish theorems which show how exchanging volume process and volume fixing variations work in the Riemannian setting.
\end{enumerate}

\subsection{Acknowledgements}  
This article is part of my Ph.D thesis written under the advising of Stefano Nardulli. I would like to give a special thanks to Stefano Nardulli, for his enthusiasm for the project, for his contribution with precious ideas and for bringing my attention to the subject of this paper. The discussions, encouragements and comments of my co-advisor, Glaucio Terra, were very valuable for this work. I also show appreciation to Frank Morgan for his edits of the original text. This study was financed in part by the Coordena\c{c}\~{a}o de Aperfei\c{c}oamento de Pessoal de N\'{i}vel Superior - Brasil (CAPES) - Finance Code 88882.377954/2019-01. 

\newpage

\section{Proof of the main theorem}\label{proof-main-theorem}

\subsection{The structure lemma and the generalized compactness theorem}

We will state the Lemma that provides a kind of structure for sequence of clusters with uniformly bounded perimeter and components of the volume vector. It will be useful since it splits each chamber of the cluster in various pieces (which can be infinite) with properties of volume and perimeter being well preserved. Moreover, some convergence properties in the $C^{m, \alpha}$ or Gromov-Hausdorff convergence sense will work very well too. 

\begin{lemma}[Structure Lemma for Sequences of Clusters]\label{Lemma:StructureLemma} Assume that $(M^n, g)$ has bounded geometry. Let $\{ \cluster{E}_k \}_{k\in\mathbb{N}}$ be a sequence of $N$-clusters in $(M^n, g)$ with $\mathcal{P}_g(\cluster{E}_k) \leq P$ and $\vv{E_\mathit{k}}(h) \leq \v(h)$, for $h\in\{1, ..., N\}$, where $P\in \mathbb{R}_{+}$ and $\v\in\mathbb{R}^{N}_{+}$. Then there exists $J\in\mathbb{N}\cup\{+\infty\}$ such that, for all $j\in\{1, ... ,J\}$, there exist a sequence of points $(p_{jk}^h)_{k\in\mathbb{N}}$, a sequence of radii $R_{jk}\to +\infty$ as $k$ goes to $+\infty$ and a volume vector $\v_j\in\mathbb{R}^{N}$, such that, if we set $\cluster{E}^{0}_k(h) =  \cluster{E}_k(h)\cap\ball{p_{0k}^h}{R_{0k}}$,

$$\cluster{E}^j_k(h) = \cluster{E}_k(h)\cap\ball{p_{jk}^h}{R_{jk}}\setminus\mathring{\bigcup}_{i=0}^{j-1}\cluster{E}_k^{i}(h),  $$

for $j\geq 1$, and the $N$-cluster $\cluster{E}'_k = \{\cup_{j=0}^J\cluster{E}_k^j(h)\}_{h=1}^N$, the following properties hold

\begin{enumerate}[$(i):$]

	\item $\v_j(h) = \lim_{k\to+\infty}\vol{\cluster{E}^j_k(h)}$, for each $h\in\{1, ..., N\}$,
		
	\item $0<\vv{E'_{\mathit{k}}}(h) = \sum_{j=0}^J \v_j(h) \leq \v(h)$. Moreover, if $\vv{E_\mathit{k}}\to \v$, then $\vv{E'_\mathit{k}} \to \sum_{j=0}^J \v_j = \v,$
	
	\item $\lim_{k\to+\infty}\mathcal{P}_g(\cluster{E}'_k)\leq \lim_{k\to+\infty}\mathcal{P}_g(\cluster{E}_k) \leq P$.
	
\end{enumerate}

\end{lemma}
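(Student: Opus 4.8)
The plan is to perform a concentration--compactness decomposition on the occupied regions $\Omega_k:=\bigcup_{h=1}^{N}\cluster{E}_k(h)$. Since $\cluster{E}_k(0)=M\setminus\Omega_k$, Proposition~\ref{EquationToPerimeter} gives $\peri{\Omega_k}=\peri{\cluster{E}_k(0)}\le 2\peri{\cluster{E}_k}\le 2P$, while $\vol{\Omega_k}=\sum_{h}\vol{\cluster{E}_k(h)}\le\sum_{h}\v(h)=:V$; after a subsequence we may assume $\vol{\cluster{E}_k(h)}\to v^{\infty}(h)\le\v(h)$. Bounded geometry enters through two facts. (a) A \emph{uniform small--volume isoperimetric inequality}: there are $\bar v,c_1>0$ depending only on $n,k,v_0$ with $\peri{E}\ge c_1\vol{E}^{(n-1)/n}$ whenever $0<\vol{E}\le\bar v$; locally, a relative isoperimetric inequality with uniform constants holds inside metric balls of a fixed radius $r_0<inj_M$, and by Bishop--Gromov (from $Ric_g\ge k(n-1)$ and $\vol{\ball{p}{inj_M}}\ge v_0$) one covers $M$ by $r_0$--balls with overlap bounded by $\Theta=\Theta(n,k,r_0)$. (b) A \emph{spherical slicing estimate}: for $p\in M$ and $0<s<t$, the coarea formula applied to $d(p,\cdot)$ and to the density--one sets $\dens{\cluster{E}_k(h)}{1}$ yields $\rho\in[s,t]$ with $\sum_{h=1}^{N}\hmde{\dens{\cluster{E}_k(h)}{1}\cap\partial\ball{p}{\rho}}\le\vol{\Omega_k}/(t-s)$, so that slicing every chamber along $\partial\ball{p}{\rho}$ raises the perimeter by at most this amount; taking $t-s\to+\infty$ and $s\to+\infty$ makes such a cut negligible while forcing $\rho\to+\infty$.

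I then extract the pieces one at a time, \emph{without} requiring the balls to be disjoint: because of the subtraction $\setminus\mathring{\bigcup}_{i<j}\cluster{E}^{i}_k(h)$, the pieces always partition $\cluster{E}_k(h)\cap\bigcup_{j}\ball{p^{h}_{jk}}{R_{jk}}$, so that $\cluster{E}'_k(h)=\cluster{E}_k(h)\cap\bigcup_j\ball{p^{h}_{jk}}{R_{jk}}$. Fix a basepoint $o$; take $\cluster{E}^{0}_k(h):=\cluster{E}_k(h)\cap\ball{o}{R_{0k}}$ with $R_{0k}\to+\infty$ chosen by (b) with negligible cut. For $j=1,\dots,N$ let the $j$-th ball be centered at a point $q^{j}_{k}$ of density one of $\cluster{E}_k(j)$, with radius $R_{jk}\to+\infty$ of negligible cut; this guarantees $\vol{\cluster{E}'_k(h)}>0$ for every $k$ and every $h$. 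For $j\ge N+1$, on the residual $\Omega^{(j)}_k:=\Omega_k\setminus\mathring{\bigcup}_{i<j}\cluster{E}^{i}_k$ (of perimeter $\le 2P+o(1)$) form the concentration functions $Q^{(j)}_k(R):=\sup_{p}\vol{\Omega^{(j)}_k\cap\ball{p}{R}}$; a diagonal extraction over a dense set of radii gives $Q^{(j)}_k(R)\to Q^{(j)}(R)$ and $\beta_j:=\lim_{R\to+\infty}Q^{(j)}(R)$. If $\beta_j=0$, then covering by $r_0$--balls of bounded overlap and applying the relative isoperimetric inequality gives $\vol{\Omega^{(j)}_k}\le(\Theta/c_1)\,Q^{(j)}_k(r_0)^{1/n}\,\peri{\Omega^{(j)}_k}\to 0$, the residual volume vanishes, and the extraction stops; otherwise pick $p_{jk}$ with $\vol{\Omega^{(j)}_k\cap\ball{p_{jk}}{R}}>\beta_j/2$ for a suitable $R$, a radius $R_{jk}\to+\infty$ with negligible cut, and set $\cluster{E}^{j}_k(h):=\cluster{E}_k(h)\cap\ball{p_{jk}}{R_{jk}}\setminus\mathring{\bigcup}_{i<j}\cluster{E}^{i}_k(h)$. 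Passing to a subsequence, $\vol{\cluster{E}^{j}_k(h)}\to\v_j(h)$ for all $j,h$. Each nontrivial bubble carries limiting volume $\sum_h\v_j(h)\ge\beta_j/2>0$ with $\sum_j\sum_h\v_j(h)\le V$, so the extraction either stops after $J\in\mathbb N$ steps with residual volume $\to 0$, or runs forever with $\beta_j\to 0$ and residual volume $\to 0$ as $j\to\infty$ ($J=+\infty$), in which case one diagonalizes at once in $k$, in $j$, and in the radii, truncating at a slowly growing index. The centers $p^{h}_{jk}$ of the statement are the common centers just described (hence in fact independent of $h$).

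Conclusion (i) is the subsequence choice. For (ii): $\cluster{E}'_k(h)=\cluster{E}_k(h)\cap\bigcup_j\ball{p^{h}_{jk}}{R_{jk}}$ is a nonempty chamber (it contains the ball about a density point of $\cluster{E}_k(h)$), and since the residual volume $\to0$ we get $\vol{\cluster{E}'_k(h)}\to\sum_{j=0}^{J}\v_j(h)=v^{\infty}(h)\le\v(h)$, with equality to $\v$ when $\vv{E_\mathit{k}}\to\v$. For (iii): from $\cluster{E}'_k(h)=\cluster{E}_k(h)\cap\bigcup_j\ball{p^{h}_{jk}}{R_{jk}}$ and $\partial\bigl(\bigcup_j\ball{p^{h}_{jk}}{R_{jk}}\bigr)\subseteq\bigcup_j\partial\ball{p^{h}_{jk}}{R_{jk}}$, the slicing estimates give $\peri{\cluster{E}'_k(h)}\le\peri{\cluster{E}_k(h)}+\sum_j\hmde{\dens{\cluster{E}_k(h)}{1}\cap\partial\ball{p^{h}_{jk}}{R_{jk}}}=\peri{\cluster{E}_k(h)}+o(1)$ for $h=1,\dots,N$, and likewise $\peri{\cluster{E}'_k(0)}=\peri{\Omega_k\cap\bigcup_j\ball{p_{jk}}{R_{jk}}}\le\peri{\cluster{E}_k(0)}+o(1)$; summing over $h$ and applying Proposition~\ref{EquationToPerimeter} to $\cluster{E}'_k$ and to $\cluster{E}_k$ yields $\limsup_{k}\peri{\cluster{E}'_k}\le\lim_{k}\peri{\cluster{E}_k}\le P$, which is (iii). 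The main obstacle is the bookkeeping of the iteration when $J=+\infty$: one must diagonalize simultaneously in the cluster index $k$, the piece index $j$ and the cutting radii, so that the countably many pieces retain their volumes, the cuts stay summably small, and no residual mass is lost in the limit --- and ingredient (a) is essential precisely here, since it forbids the residual from dispersing thinly with bounded perimeter.
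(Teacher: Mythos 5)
Your proof is correct in its essentials and reaches all three conclusions, but it organizes the decomposition differently from the paper. The paper applies the Concentration-Compactness Lemma (Lemma 2.1 of \cite{nardulli2014generalized}) \emph{chamber by chamber} to each $\cluster{E}_k(h)$, which automatically yields a positive-volume first bubble $\v_0(h)>0$ for every $h$, at the price of producing centers $p_{jk}^h$ that genuinely depend on $h$; it then proves that the residual vanishes by citing Nardulli's Lemma 2.5 (a lower bound for the volume captured in a ball in terms of $\vol{\cdot}^n/\peri{\cdot}^n$), and obtains $J<\infty$ only later, in Lemma \ref{Lemma:StructureLemmaForMinimizing}. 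You instead run the concentration argument on the \emph{union} $\Omega_k=\bigcup_h\cluster{E}_k(h)$ via explicit concentration functions $Q^{(j)}_k$, which makes the centers $h$-independent (and hence the pieces genuinely form an $N$-cluster with a common bubble structure), but you must then insert the initial $N$ density-point balls to guarantee $\vol{\cluster{E}'_k(h)}>0$ for every chamber, and you re-derive the vanishing of the residual from scratch via a bounded-overlap covering of $M$ by $r_0$-balls and the uniform relative isoperimetric inequality rather than invoking Nardulli's lemma. Both routes share the same three ingredients — concentration-compactness, coarea/mean-value choice of cutting radii to make the slicing perimeters $o(1)$, and Proposition \ref{EquationToPerimeter} for conclusion $(iii)$ — and both leave the diagonalization in $(k,j,R)$ at the $J=+\infty$ stage somewhat compressed; you are at least explicit that this is the delicate bookkeeping step, which the paper glosses over. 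The one place where you should be a bit more careful is in asserting that $\vol{\cluster{E}_k(h)\cap\ball{q^h_k}{R_{hk}}}$ does not degenerate as $k\to\infty$: this is true because $R_{hk}\to\infty$, but it deserves a line; similarly, the claim that the $j\ge N{+}1$ centers are $h$-independent is a genuine simplification your union-decomposition buys, whereas the paper's chamber-wise centers may diverge from one another for fixed $j$, a subtlety that matters when these pieces are later fed into the Gromov-Hausdorff limit construction of Theorem \ref{Theorem:Compactness}.
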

\begin{proof}
By the Concentration-Compactness Lemma (Lemma 2.1 of \cite{nardulli2014generalized}, the bounded geometry hypothesis, Definition \ref{Def:BoundedGeometry}, guarantees that property $(iii)$ of the Lemma 2.1 holds), there exist $\v_0(h)\in (0, \v(h))$ and a sequence of points $(p_{0k}^h)_{k\in\mathbb{N}}$ such that for any sequence $\epsilon^{0,k} \to 0$ there exists a sequence of radii $(R^{0,k})_{k\in\mathbb{N}}$ which satisfies 

$$ \biggl | \vol{\cluster{E}_k(h) \cap \ball{p_{0k}^h}{R'}} - \v_0(h) \biggr | < \epsilon^{0,k}, $$

whenever $R' \geq R^{0,k}$ and $k$ sufficiently large. By the mean value property for integrals, we find a radii $R_{0k}\in [R^{0, k}, R^{0, k}+k]$ such that

\begin{equation}\label{StructureLemmaEq1}
\peri{\cluster{E}_k(h)\cap\ball{p_{0k}^h}{R_{0k}}} = \frac{1}{R^{0, k} + k - R^{0, k}}\int_{R^{0, k}}^{R^{0, k}+k}\peri{\cluster{E}_k(h)\cap\ball{p_{0k}^h}{r}}\diff r.
\end{equation} 

Without loss of generality, we can assume that $\hmde{\frored{\cluster{E}_k(h)}\cap\partial\ball{p_{0k}^h}{R_{0k}}}=0$, therefore, this assumption, the operations with the perimeter measure, (\ref{StructureLemmaEq1}) and the coarea formula into account to get that

\begin{equation}\label{StructureLemmaEq2}
\begin{split}
\sum_{h=1}^{N}\peri{\cluster{E}_k(h)\cap\ball{p_{0k}^h}{R_{0k}}} = \\
 \sum_{h=1}^{N}\frac{1}{k}\int_{R^{0, k}}^{R^{0, k}+k}\per{\cluster{E}_k(h)}{\ball{p_{0k}^h}{r}^{(1)}} + \per{\ball{p_{0k}^h}{r}}{\cluster{E}_k(h)^{(1)}}\diff r \leq \\
\sum_{h=1}^{N}\peri{\cluster{E}_k(h)} + \frac{1}{k}\sum_{h=1}^{N}\v(h) \leq P + \frac{1}{k}\sum_{h=1}^{N}\v(h).
\end{split}
\end{equation} 

We will fix the notation $\cluster{E}_k^0(h) \doteq \cluster{E}_k(h) \cap \ball{p_{0k}^h}{R_{0k}}$. In order to repeat this process and therefore obtain a better approach for the volume of the chamber $\cluster{E}_{k}(h)$, we apply the Concentration-Compactness Lemma for the set

$$\cluster{E}_k(h)\setminus\cluster{E}_k^0(h).$$

Thus, we get the existence of $\v_1(h)\in (0, \v_1(h))$ and a sequence of points $(p_{1k}^h)$ such that for all sequence $\epsilon^{1,k} \to 0$ there exists a sequence of radii $(R^{1,k})_{k\in\mathbb{N}}$ with the following property

$$ \biggl | \vol{\cluster{E}_k(h) \cap \ball{p_{1k}^h}{R'}\setminus\cluster{E}_k^0(h)} - \v_1(h) \biggr | < \epsilon^{1,k}, $$

whenever $R' \geq R^{1,k}$ and $ k$ sufficiently large and then we also set $\cluster{E}_k^1(h) = \cluster{E}_k(h) \cap \ball{p_{1k}^h}{R_{1k}}\setminus\cluster{E}_k^0(h)$ with $R_{1k}$ such that (\ref{StructureLemmaEq2}) holds for $\cluster{E}_k^1(h)$. Now, we are in position to apply the Concetration-Compactness Lemma inductively, using in the j-th step, $j\geq 2$, the set

$$ \cluster{E}_k(h)\setminus\bigcup_{i=0}^{j-1}\cluster{E}_k^{i}(h),$$

hence obtaining $\v_j(h)\in (0, \v_{j-1}(h))$ and $(p_{jk}^h)$ such that for all sequence $\epsilon^{j,k} \to 0$ there exists a sequence of radii $(R^{j,k})_{k\in\mathbb{N}}$ with the following property

\begin{equation}\label{structurelemma1}
\biggl | \vol{\cluster{E}_k(h)\cap\ball{p_{jk}^h}{R'}\setminus\bigcup_{i=0}^{j-1}\cluster{E}_k^{i}(h)} - \v_j(h) \biggr | < \epsilon^{j,k},
\end{equation}  

whenever $R' \geq R^{j,k}$ and $k$ sufficiently large, we inductively denote by 

$$\cluster{E}_k^j(h) =  \cluster{E}_k(h)\cap\ball{p_{jk}^h}{R_{jk}}\setminus\bigcup_{i=0}^{j-1}\cluster{E}_k^{i}(h), $$

where $R_{jk}$ is taken to satisfy (\ref{StructureLemmaEq2}) holds for $\cluster{E}_k^j(h)$. So, we shall iterate the algorithm until we reach the desired $J\in\mathbb{N}\cup\{+\infty\}$. Finally, we certainly have that

$$ \v_j(h) = \lim_{k\to+\infty}\vol{\cluster{E}_\mathit{k}^\mathit{j}(h)} \leq \v(h). $$

We can easily see by the construction that the first assertion of item $(ii)$ is already proved. Therefore we suppose that $\vv{E_\mathit{k}} \to \v$ and $\sum_{j=0}^J \v_j(h) < \v(h)$, it is a direct consequence of Lemma 2.5 of \cite{nardulli2014generalized} applied to the set $\cluster{E}_k(h)\setminus\cluster{E}_k^j(h)$ that exists $p'\in M$ such that

$$ \v_{j+1}(h) \geq \vol{\ball{p'}{R^{j, k}}\cap\biggl(\cluster{E}_k(h)\setminus\cluster{E}_k^j(h)\biggr)} \geq c_3(n, k, v_0)\frac{\vol{\cluster{E}_k(h)\setminus\cluster{E}_k^j(h)}^n}{\mathcal{P}_g(\cluster{E}_k(h)\setminus\cluster{E}_k^j(h))^n}.$$ 

Passing through the limit as $k$ and $j$ goes to $+\infty$, we obtain that

$$ \lim_{j\to+\infty}\v_j(h) \geq c_3(n, k, v_0)\frac{\biggl( \v(h) - \sum_{j=1}^J \v_j(h) \biggr)^n}{\mathcal{P}_g(\cluster{E}_k(h)\setminus\cluster{E}_k^j(h))^n} > 0. $$

which is a contradiction with the fact that $\sum_{j=0}^J \v_j(h)$ is a convergent series and thus ensuring the validity of $(ii)$. Since (\ref{StructureLemmaEq2}) is true for all $j\in\{0, 1,..., J\}$, Proposition \ref{EquationToPerimeter} finishes the proof of $(iii)$ and of this Lemma.
\end{proof}

Let us prove the generalized compactness for sequences of clusters (Theorem \ref{Theorem:Compactness}).

\begin{proof}[Proof of Theorem \ref{Theorem:Compactness}]
By Gromov's Compactness Theorem and a diagonalization process applied to the sets $\cluster{E}^j_k(h)$ of Lemma \ref{Lemma:StructureLemma}, we ensure the existence of the manifolds $(M_{\infty}(h), g)$, the sequence of points $(p_{j\infty}^h)$ and the finite perimeter set $\cluster{E}_{\infty}(h)$. Theorem 4.0.6 of \cite{dai2012comparison} shows that the volume vectors will converge as desired, in view of the convergence of the perimeter, the $C^0$-bounded geometry assumption do all the work since all the notions in the definition of perimeter (\ref{Def:Perimeter}) are well transported to the limit manifolds by $C^0$-convergence of metrics. 
\end{proof}

\subsection{H{\"o}lder continuity of the multi-isoperimetric profile}

The H\"{o}lder continuity of the isoperimetric profile will be used in the next step of our framing of the proof of Theorem \ref{Theorem:Existence}.

\begin{proof}[Proof of Theorem \ref{Theorem:HolderContinuity}]
The proof goes in the same steps taken in Theorem 2 of \cite{flores2019local} with minor modifications. Let us briefly record it here, given $\epsilon > 0$, we can find $\cluster{E}$ an $N$-cluster such that $\vv{E} = \v$ and $\peri{\cluster{E}} \leq I_M(\v) + \epsilon$. Let us define $\Lambda_1$  as the set of those $h\in\{1, ..., N\}$ which $\v(h) \leq \v'(h)$. We take $p_h$ and $r_{\v'(h)}$, for $h\in\Lambda_1$, in order to have $\cluster{E}_1(h) = \cluster{E}(h)\cap\geoball{p_h}{r_{\v'(h)}}$ with $\vol{\cluster{E}_1(h)} = \v'(h)$. From the spherical Bishop-Gromov's Theorem, we obtain that

\begin{equation}\label{Eq:continuity1}
\peri{\geoball{p_1}{r_{\v'(h)}}} \leq C_1(n,k)r_{\v'(h)} \leq  C_2(n, k)\biggl( \frac{\v'(h) - \v(h)}{v_0} \biggr)^{\frac{n-1}{n}},
\end{equation}

for $h\in\Lambda_1$. Let us define $\Lambda_2$ as the complementary set of $\Lambda_1$, i.e. $\Lambda_2 = \{1, ..., N\}\setminus\Lambda_1$. Then, for $h\in\Lambda_2$, we apply Lemma 2.5 of \cite{nardulli2014generalized} for $\chamber{E}{h}$ which furnishes, for any $\v'(h)\in ]\v(h) - l, \v(h)[$, the inequality

$$ \vol{\chamber{E}{h}\cap\ball{p_h}{\biggl(\frac{\v(h) - \v'(h)}{cv_0}\biggr)^n}} \geq \min\biggl \{\v(h) - \v'(h), c\bigg( \frac{\v(h)}{I_M(\v) + \epsilon} \biggr)^n \biggr \}$$
$$ = \v(h) - \v'(h), $$

where $l = c\min\{ v_0, \biggl( \frac{\v(h)}{I_M(\v) + \epsilon} \biggr)^n \}$. We thus choose $r_{\v'(h)}$, for $h\in\Lambda_2$, such that the finite perimeter set $\cluster{E}_2(h) = \cluster{E}(h)\setminus\ball{p_h}{r_{\v'(h)}}$ satisfies $\vol{\cluster{E}_2(h)} = \v'(h)$ and again by Bishop-Gromov's Theorem we have that

\begin{equation}\label{Eq:continuity2}
\peri{\geoball{p_1}{r_{\v'(h)}}} =  C_2(n, k)\biggl( \frac{\v'(h) - \v(h)}{v_0} \biggr)^{\frac{n-1}{n}}.
\end{equation}

Finally, we define the finite perimeter sets $\cluster{E}'(h) = \cluster{E}_1(h)$, for $h\in\Lambda_1$, $\cluster{E}'(h) = \cluster{E}_2(h)$, for $h\in\Lambda_2$ and thus the cluster $\cluster{E}' = \{ \cluster{E}'(h) \}_{h=1}^{N}$ satisfies $\vv{\cluster{E}'} = \v'$. We put Proposition  \ref{EquationToPerimeter}, (\ref{Eq:continuity1}) and (\ref{Eq:continuity2}) into account to obtain

$$ I_M(\v') \leq \peri{\cluster{E}'} \leq \frac{1}{2}\biggl( \sum_{h\in\Lambda_1}\peri{\cluster{E}_1} + \sum_{h\in\Lambda_2}\peri{\cluster{E}_2} \biggr) $$
$$\leq \frac{1}{2}\sum_{h\in\Lambda_1}\biggl( \peri{\cluster{E}(h)} + \peri{\ball{p_h}{r_\v'(h)}}  \biggr) + \frac{1}{2}\sum_{h\in\Lambda_2}\biggl( \peri{\cluster{E}(h)} + \peri{\ball{p_h}{r_\v'(h)}}  \biggr)  $$
$$ \leq \peri{\cluster{E}} + C_2(n, k)\sum_{h=1}^{N}\biggl( \frac{\v'(h) - \v(h)}{v_0} \biggr)^{\frac{n-1}{n}} \leq I_M(\v) + \epsilon + C_2(n, k)\sum_{h=1}^{N}\biggl( \frac{\v'(h) - \v(h)}{v_0} \biggr)^{\frac{n-1}{n}}.$$

Letting $\epsilon$ goes to $0$ and applying the inequality above exchanging the components of $\v'$ and $\v$ as needed to reach the inequality with the modulus, we conclude the proof of the Theorem.
\end{proof}

\subsection{The structure lemma for minimizing sequences and the generalized existence theorem}

Sequences of clusters satisfying the hypothesis of Lemma \ref{Lemma:StructureLemmaForMinimizing} are called either {\bf minimizing sequences of $N$-clusters for $\v\in\mathbb{R}^{N}_{+}$} or {\bf minimizing sequences for the multi-isoperimetric problem}. In Lemma \ref{Lemma:StructureLemma} we only assumed that $(M^n, g)$ has bounded geometry and the components of the vector volumes and the perimeters of the sequence of the clusters were uniformly bounded. However, if we force the sequence of clusters to be a minimizing sequence for the multi-isoperimetric problem, we can show what is the limit of the sequence of the perimeters instead of the simply existence of it provided by item $(iii)$ of the Lemma \ref{Lemma:StructureLemma}. Moreover,  this stronger assumption on the sequence of clusters put us in position to prove that the number of pieces that we split the clusters is finite, i.e. $J\in\mathbb{N}$.

\begin{lemma}[Structure Lemma for Minimizing Sequences of Clusters]\label{Lemma:StructureLemmaForMinimizing} Assume that $(M^n, g)$ has bounded geometry. Let $\{ \cluster{E}_k \}_{k\in\mathbb{N}}$ be a sequence of $N$-clusters in $(M, g)$ with $\vv{E_\mathit{k}} = \v\in\mathbb{R}^{N}_{+}$, $\forall k\in\mathbb{N}$, and $\mathcal{P}_g(\cluster{E}_k) \to I_M(\v)$. Then there exists $J\in\mathbb{N}$ such that, for all $j\in\{1, ... J\}$, there exist a sequence of points $(p_{jk}^h)_{k\in\mathbb{N}}$, a sequence of radii $R_{jk}\to +\infty$ as $k$ goes to $+\infty$ and a volume vector $\v_j\in\mathbb{R}^{N}$, such that, if we set $\cluster{E}^{0}_k(h) =  \cluster{E}_k(h)\cap\ball{p_{0k}^h}{R_{0k}}$,

$$\cluster{E}^j_k(h) = \cluster{E}_k(h)\cap\ball{p_{jk}^h}{R_{jk}}\setminus\mathring{\bigcup}_{i=0}^{j-1}\cluster{E}_k^{i}(h),  $$

for $j\geq 1$, and the $N$-cluster $\cluster{E}'_k = \{\cup_{j=0}^J\cluster{E}_k^j(h)\}_{h=1}^N$, the following properties hold

\begin{enumerate}[$(i):$]

	\item $\v_j(h) = \lim_{k\to+\infty}\vol{\cluster{E}^j_k(h)}$, for each $h\in\{1, ..., N\}$,
		
	\item $\vv{E'_\mathit{k}} \to \sum_{j=0}^J \v_j = \v$,
	
	\item $\lim_{k\to+\infty}\mathcal{P}_g(\cluster{E}'_k) = \lim_{k\to+\infty}\mathcal{P}_g(\cluster{E}_k) = I_M(\v)$.
	
\end{enumerate}

\end{lemma}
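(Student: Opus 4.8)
The plan is to deduce everything from Lemma~\ref{Lemma:StructureLemma}, the H\"older continuity of the profile, and the compactness machinery behind Theorem~\ref{Theorem:Compactness}. Since here $\vv{E_\mathit{k}}=\mathbf{v}$ is constant (so trivially $\vv{E_\mathit{k}}\to\mathbf{v}$), Lemma~\ref{Lemma:StructureLemma} directly produces the points $p_{jk}^h$, the radii $R_{jk}\to+\infty$, the vectors $\mathbf{v}_j$ and the pieces $\cluster{E}_k^j(h)$, and it already gives item~$(i)$, item~$(ii)$ in the form $\vv{E'_k}\to\sum_{j=0}^{J}\mathbf{v}_j=\mathbf{v}$, and the upper bound $\limsup_{k\to+\infty}\peri{\cluster{E}'_k}\leq\lim_{k\to+\infty}\peri{\cluster{E}_k}=I_M(\mathbf{v})$. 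What must still be proved is therefore the matching lower bound for $(iii)$ and the finiteness $J\in\mathbb{N}$.

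For $(iii)$ I would invoke the H\"older continuity of the multi-isoperimetric profile (Theorem~\ref{Theorem:HolderContinuity}). Indeed $\cluster{E}'_k$ is an $N$-cluster with $\vv{E'_k}\to\mathbf{v}$, so for $k$ large $\vv{E'_k}\in\textbf{B}_{\mathbb{R}^{N}}(\mathbf{v},R_{\mathbf{v}})$ and $\peri{\cluster{E}'_k}\geq I_M(\vv{E'_k})\geq I_M(\mathbf{v})-C(n,k)\bigl(\modulo{\vv{E'_k}-\mathbf{v}}/v_0\bigr)^{\frac{n-1}{n}}$, whence $\liminf_{k\to+\infty}\peri{\cluster{E}'_k}\geq I_M(\mathbf{v})$. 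Together with the upper bound from Lemma~\ref{Lemma:StructureLemma} this gives $\lim_{k\to+\infty}\peri{\cluster{E}'_k}=I_M(\mathbf{v})=\lim_{k\to+\infty}\peri{\cluster{E}_k}$, i.e. $(iii)$.

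It remains to rule out $J=+\infty$, which I expect to be the delicate step and the only place where minimality of the sequence is essentially used. Following the $N=1$ argument of \cite{nardulli2014generalized}, I would first pass each sequence of pieces $\{\cluster{E}_k^j(h)\}_k$ to a limit exactly as in the proof of Theorem~\ref{Theorem:Compactness}, getting for each $j$ a limit pointed manifold $M_{\infty}^j$ (sharing the bounded-geometry constants) and a limit cluster $\cluster{E}_{\infty}^j$ of volume vector $\mathbf{v}_j$ and perimeter $\lim_k\peri{\cluster{E}_k^j}$. Since the pieces are pairwise disjoint with mutually divergent supports, the lower semicontinuity inequality of Proposition~\ref{EquationToPerimeter} (namely (\ref{lsc})) applied piece by piece yields $\sum_{j=0}^{J}\peri{\cluster{E}_{\infty}^j}\leq\liminf_k\peri{\cluster{E}'_k}=I_M(\mathbf{v})$, while a recombination of near-optimal copies of finitely many $\cluster{E}_{\infty}^j$ placed far apart inside $M$ (those sitting in a limit manifold being approximated near a divergent sequence of points through the $C^{0}$-convergence of metrics), with the vanishing residual volume $\sum_{j>J'}\mathbf{v}_j$ restored by small geodesic balls controlled via Bishop--Gromov, gives the reverse inequality; hence $\sum_{j=0}^{J}\peri{\cluster{E}_{\infty}^j}=I_M(\mathbf{v})$ and each $\cluster{E}_{\infty}^j$ is isoperimetric for $\mathbf{v}_j$. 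If now $J=+\infty$, the tail $\sum_{j>J'}\mathbf{v}_j\to 0$ is an infinite sum of nonzero volume vectors of isoperimetric pieces, and since on small volumes the profile of any manifold with the prescribed bounded geometry behaves like the Euclidean one — positively $\tfrac{n-1}{n}$-homogeneous, hence \emph{strictly} subadditive with the quantitative gap coming from $(s+t)^{(n-1)/n}<s^{(n-1)/n}+t^{(n-1)/n}$ — replacing the whole tail by a single isoperimetric cluster of the same total volume would strictly lower $\sum_j\peri{\cluster{E}_{\infty}^j}$, contradicting its minimality; thus only finitely many $\mathbf{v}_j$ are nonzero, i.e. $J\in\mathbb{N}$. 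The main obstacles in making this precise are the subadditivity of the generalized profile across the family $\{M\}\cup\{M_{\infty}^j\}$ and the uniformity, over that family, of the small-volume expansion of $I$, both of which rest on the shared constants $(n,k,v_0)$.
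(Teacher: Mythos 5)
For items $(i)$, $(ii)$, and $(iii)$ your argument matches the paper's. Both reduce $(i)$--$(ii)$ to Lemma~\ref{Lemma:StructureLemma}, and for $(iii)$ both use the continuity of the multi-isoperimetric profile (Theorem~\ref{Theorem:HolderContinuity}) together with the trivial bound $\peri{\cluster{E}'_k}\geq I_M(\vv{E'_\mathit{k}})$ to obtain the lower bound, the upper bound being already in Lemma~\ref{Lemma:StructureLemma}.

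For the finiteness of $J$, however, you take a genuinely different route. The paper argues directly by contradiction through an inequality on the ratio $I_M(\v'_J)/\norm{\v'_J}$: a density-type estimate coming from the ``second proof'' of Theorem~3 in \cite{nardulli2014generalized} gives a uniform upper bound for this ratio, while the isoperimetric inequality at small volume (Lemma~\ref{Lemma:IsoperimetricInequality}), after merging chambers via (\ref{normalinterfaces}) and Proposition~\ref{EquationToPerimeter}, forces $I_M(\v'_J)\geq c\,\norm{\v'_J}^{(n-1)/n}$, hence the ratio blows up like $\norm{\v'_J}^{-1/n}$ as $J\to+\infty$. It is a short, purely inequality-theoretic argument that never leaves the manifold $M$. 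You, instead, run the compactness machinery piece by piece, identify each $\cluster{E}^j_\infty$ as an isoperimetric cluster in a limit manifold, prove the energy identity $\sum_j\peri{\cluster{E}_\infty^j}=I_M(\v)$, and then kill infinitely many small pieces by strict subadditivity of the profile at small volume. This is closer to the Leonardi--Ritor\'e / Nardulli ``generalized existence'' viewpoint and has the advantage of simultaneously building the structure needed for Theorem~\ref{Theorem:Existence}; but it is heavier, and you correctly flag the real technical debts it incurs — the energy identity needs a careful recombination/approximation argument (transplanting pieces from $M_\infty^j$ back into $M$ via $C^0$-convergence and restoring the residual volume with small balls), the Fatou-type interchange in passing from $\liminf_k\sum_j$ to $\sum_j\liminf_k$ needs the pieces to decouple cleanly, and the strict subadditivity must be proved uniformly over the whole family $\{M\}\cup\{M_\infty^j\}$ (which is plausible since all share the same $(n,k,v_0)$, but is not automatic and is exactly the content hidden in Lemma~\ref{Lemma:IsoperimetricInequality}). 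In short: your $(i)$--$(iii)$ are the paper's; your finiteness argument is a legitimate alternative strategy but is presented as a sketch with explicitly acknowledged gaps, whereas the paper's finiteness argument is a shorter closed-form contradiction that sidesteps the limit-manifold construction entirely.
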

\begin{obs}
We notice that item $(ii)$ shows that the new sequence of clusters $\cluster{E}'_k$ is not needed to be minimizing for the multi-isoperimetric problem since we just guaranteed the convergence of the vector volume sequence $\vv{E'_\mathit{k}}$ to the vector volume $\v$. 
\end{obs}
\begin{proof}
The unique part that is not a particular case of Lemma \ref{Lemma:StructureLemma} is item $(iii)$. With the aim to prove $(iii)$, we put Lemma 2.8 of \cite{nardulli2014generalized} into account (this Lemma's proof also works for the multi-isoperimetric profile) and we use the continuity of $I_M$ given by Theorem \ref{Theorem:HolderContinuity} to get that 

$$ I_M(\v) = \lim_{k\to+\infty}I_M(\vv{E'_\mathit{k}}) \leq \lim_{k\to+\infty}\peri{\cluster{E}'_k}. $$

Furthermore, item $(iii)$ of Lemma \ref{Lemma:StructureLemma} ensures the reverse inequality. To see that $J<+\infty$, we proceed by contradiction. As in the second proof of Theorem 3 in \cite{nardulli2014generalized}, we can prove the existence of a constant $M$, which does not depend on $J$, such that

\begin{equation}\label{Jfinite1}
M \geq \frac{I_M(\v'_J)}{\norm{\v'_J}}, \ \ \v'_J = \v - \sum_{j=0}^{J}\v_j,
\end{equation}

whenever $J$ is sufficiently large and thus $\v'_J$ has small norm. Now, we want to obtain a lower bound of $I_M(\v'_J)$, possibly depending on $\v'_J$, in order to lead (\ref{Jfinite1}) into a contradiction. By Proposition \ref{EquationToPerimeter}, Caccioppoli sets operations and putting into account the density properties of the interfaces (precisely, equation (\ref{normalinterfaces})), we have

$$ I_M(\v'_J) \geq \inf\left \{ \frac{1}{2}\sum_{h=1}^{N}\peri{\chamber{E}{h}} : \cluster{E} \ \text{is a N-cluster with} \ \vv{E}=\v'_J \right \} \geq  $$
$$ \frac{1}{2}\inf\left \{ \per{\chamber{E}{1}}{\chamber{E}{2}^{(0)}} + \per{\chamber{E}{2}}{\chamber{E}{1}^{(0)}} + \sum_{h=3}^{N}\peri{\chamber{E}{h}} : \cluster{E} \ \text{is a N-cluster with} \ \vv{E}=\v'_J \right \}$$
$$ = \frac{1}{2}\inf\left \{ \peri{\chamber{E}{1}\cup\chamber{E}{2}} + \sum_{h=3}^{N}\peri{\chamber{E}{h}} : \cluster{E} \ \text{is a N-cluster with} \ \vv{E}=\v'_J \right \},$$

Repeating the same procedure, we come up with

$$ I_M(\v'_J) = \frac{1}{2}\inf\left \{ \peri{\bigcup_{h=1}^{N}\chamber{E}{h}} : \cluster{E} \ \text{is a N-cluster with} \ \vv{E}=\v'_J \right \} \geq^{\ast} $$
$$ \frac{1}{2C}\inf\left \{ \vol{\bigcup_{h=1}^{N}\chamber{E}{h}}^{\frac{n-1}{n}} : \cluster{E} \ \text{is a N-cluster with} \ \vv{E}=\v'_J \right \} = $$
$$ \frac{1}{2C}\inf\left \{ \left(\sum_{h=1}^{N}\vol{\chamber{E}{h}}\right)^{\frac{n-1}{n}} : \cluster{E} \ \text{is a N-cluster with} \ \vv{E}=\v'_J \right \} =$$
$$ \frac{1}{2C}\left( \sum_{h=1}^{N}\v'_J(h) \right)^{\frac{n-1}{n}},$$

where in $(\ast)$ we take $J$ big enough to apply the Isoperimetric Inequality (Proposition \ref{Lemma:IsoperimetricInequality} below) for small volume, i.e. $\norm{\v'_J}$ sufficiently small. The last chain of inequalities and (\ref{Jfinite1}) provide

\begin{equation}\label{Jinfinity2}
2MC \geq \frac{I_M(\v'_J)}{\norm{\v'_J}} \geq \frac{\left( \sum_{h=1}^{N}\v'_J(h) \right)^{\frac{n-1}{n}}}{\norm{\v'_J}} \geq^{\ast} \frac{K(n, N)\norm{\v'_J}^{\frac{n-1}{n}}}{\norm{\v'_J}} = \frac{K(n, N)}{\norm{\v'_J}^\frac{1}{n}} \longrightarrow
 +\infty,
\end{equation}

as $J$ goes to $+\infty$, where in $(\ast)$ the constant $K(n, N)$, which does not depend on $J$, appears from the equivalence of all norms in Euclidean spaces (specifically $\mathbb{R}^{N}$). Equation (\ref{Jinfinity2}) states the contradiction that we needed. So, $J$ has to be finite.
\end{proof}

Finally, we are in position to prove Theorem \ref{Theorem:Existence} which states the existence of minimizing clusters for the multi-isoperimetric problem. 

\begin{proof}[Proof of Theorem \ref{Theorem:Existence}]
The proof turns out to be a simple application of Theorem \ref{Theorem:Compactness} and Lemma \ref{Lemma:StructureLemmaForMinimizing}.
\end{proof}

\subsection{Boundedness of isoperimetric clusters}

We recall the statement of the classical isoperimetric inequality in the Riemannian setting which can be consulted in Lemma 3.2 of Emmanuel Hebey work (\cite{EB}), it requires that $E$ is open and has smooth boundary. Nevertheless, we state the inequality for finite perimeter sets.

\begin{lemma}[Isoperimetric Inequality]\label{Lemma:IsoperimetricInequality} Let $(M^n, g)$ be a complete Riemannian manifold with bounded geometry. There exist positive constants $C$ and $\eta$ depending only on $n, k$ and $v_0$ (Definition \ref{Def:BoundedGeometry}) such that, for any finite perimeter set $E$ of $(M^n, g)$ with $\vol{E}\leq \eta$, we have

$$\vol{E}^{\frac{n-1}{n}} \leq C\peri{E}.$$

\end{lemma}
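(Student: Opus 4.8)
The plan is to reduce the finite-perimeter statement to the smooth case already recorded in Lemma 3.2 of \cite{EB}, which asserts the existence of constants $C = C(n,k,v_0)$ and $\eta = \eta(n,k,v_0)$ such that any \emph{open} set $U$ with smooth boundary and $\vol{U} \leq \eta$ satisfies $\vol{U}^{\frac{n-1}{n}} \leq C\,\hmde{\partial U}$. The key mechanism is the standard smooth approximation of sets of finite perimeter: given $E$ with $\vol{E} \leq \eta$ and $\peri{E} < +\infty$ (otherwise the inequality is trivial), one produces a sequence of open sets $U_j$ with smooth boundary such that $\chi_{U_j} \to \chi_E$ in $L^1(M,g)$ and $\peri{U_j} = \hmde{\partial U_j} \to \peri{E}$, i.e. $U_j \to E$ in the sense of finite perimeter sets (Definition \ref{Def:ConvInTheSenseFPSets}). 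In the Euclidean setting this is classical (mollification of $\chi_E$ and Sard's theorem applied to level sets of the mollified function); on a manifold with bounded geometry one works in normal coordinate charts of controlled size, using a partition of unity subordinate to a uniformly locally finite cover by such charts, and the $C^{m,\alpha}$ control on the metric coefficients to transfer the Euclidean construction, with errors that vanish as the mollification parameter goes to $0$.

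First I would dispose of the trivial case $\peri{E} = +\infty$. Then, assuming $\vol{E} \leq \eta$ and $\peri{E} < +\infty$, I would invoke the smooth approximation above to get $U_j \to E$ with $\vol{U_j} \to \vol{E}$ and $\peri{U_j} \to \peri{E}$. For $j$ large we have $\vol{U_j} \leq \eta'$ for any fixed $\eta' $ slightly larger than $\eta$; to stay safely within the smooth hypothesis one simply runs the smooth inequality with threshold $\eta$ chosen a touch larger at the start, or equivalently shrinks the final $\eta$ for the finite-perimeter statement so that $\vol{U_j} \leq \eta_{\mathrm{smooth}}$ eventually. Applying the smooth inequality to each $U_j$ gives $\vol{U_j}^{\frac{n-1}{n}} \leq C\,\peri{U_j}$, and passing to the limit $j \to +\infty$ — using continuity of $t \mapsto t^{\frac{n-1}{n}}$ on the left and the convergence $\peri{U_j} \to \peri{E}$ on the right — yields $\vol{E}^{\frac{n-1}{n}} \leq C\,\peri{E}$, with the same constant $C$.

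The main obstacle is the smooth approximation step itself in the Riemannian-with-bounded-geometry context: one must ensure that the approximating sets have boundary measure converging to $\peri{E}$ (not merely $\liminf \hmde{\partial U_j} \geq \peri{E}$, which lower semicontinuity gives for free), and that the construction is uniform enough that no constants degenerate. This is where bounded geometry is essential: it provides charts at a uniform scale with uniformly controlled metric coefficients, so the Euclidean mollification-and-Sard argument can be localized and glued without the comparison constants between the Riemannian and Euclidean perimeters blowing up. An alternative, slightly cleaner route that avoids rebuilding this machinery is to cite the fact that sets of finite perimeter can be approximated in perimeter by sets with smooth boundary in any Riemannian manifold (a standard result in, e.g., \cite{MPPP}), and then only the limiting argument of the previous paragraph remains. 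Either way, once the approximation is in hand the proof is immediate.
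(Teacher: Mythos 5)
Your proposal is correct and follows essentially the same route as the paper: the paper's one-line proof cites Lemma 3.2 of \cite{EB} for the smooth case and a standard approximation argument (pointing to Lemma 2.4 of \cite{flores2014continuity} for the Riemannian smooth-approximation step), which is exactly what you spell out, including the careful handling of the volume threshold $\eta$ under approximation. The only superficial difference is the reference you would cite for the approximation lemma (\cite{MPPP} versus \cite{flores2014continuity}), which does not change the argument.
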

\begin{proof}
The proof of this inequality is a standard approximation argument using Lemma 3.2 of \cite{EB} and Lemma 2.4 of \cite{flores2014continuity}.
\end{proof}

The proof of the boundedness of isoperimetric clusters goes back to ideas given by Frank Morgan on Chapter 13 of \cite{MorGMT} in the Euclidean setting. We adapted these ideas to the context of Riemannian manifolds with bounded geometry.

\begin{proof}[Proof of Theorem \ref{Prop:BoundednessClusters}]
Let $\cluster{E}$ be an isoperimetric $N$-cluster of $(M^n, g)$ and fix $p\in M$. Let us define the function $V:(0, +\infty ) \to (0, +\infty )$ which measures the amount cluster's volume outside of large balls as follows 

$$ V(r) = \sum_{h=1}^{N}\vol{\chamber{E}{h}\setminus\ball{p}{r}}.$$

By the coarea formula, we get that

\begin{equation}\label{Eq:boundedness1}
V '(r) = -\sum_{h=1}^{N}\per{\ball{p}{r}}{\chamber{E}{h}}.
\end{equation}

If we set $A_h(r) = \per{\chamber{E}{h}}{M\setminus\ball{p}{r}}$, equation (\ref{Eq:boundedness1}), standard arguments with Caccioppoli sets and operations with the perimeter measure yield

\begin{equation}\label{Eq:boundedness2}
\modulo{V '(r)} + \sum_{h=1}^{N}A_h(r) = \sum_{h=1}^{N}\peri{\chamber{E}{h}\cap\left(M\setminus\ball{p}{r}\right)},
\end{equation}  

for almost all $r$ big enough. From standard Riemannian comparison geometry techniques, we can easily see that $V(r)$ is decreasing and tends to $0$ as $r$ goes to $+\infty$. Then, for almost all $r$ sufficiently large, we can apply Lemma \ref{Lemma:IsoperimetricInequality} and (\ref{Eq:boundedness2}) to obtain

\begin{equation}\label{Eq:Boundedness3}
\begin{split}
\modulo{V '(r)} + \sum_{h=1}^{N}A_h(r) \geq C\vol{\left(\bigcup_{h=1}^{N}\chamber{E}{h}\right)\cap\left(M\setminus\ball{p}{r}\right)}^{\frac{n-1}{n}} =^{\ast} \\
C\left(\sum_{h=1}^{N}\vol{\chamber{E}{h}\cap\left(M\setminus\ball{p}{r}\right)}\right)^{\frac{n-1}{n}} = CV(r)^{\frac{n-1}{n}}.
\end{split}
\end{equation}

where, by the definition of clusters (Definition \ref{Def:Clusters}), $(\ast)$ follows from the fact that the chambers do not overlap in the measure-theoretic sense (i.e. the intersection has volume zero). By an application of Lemma 13.5 of \cite{MorGMT} and Proposition \ref{EquationToPerimeter}, we have

\begin{equation}\label{Eq:Boundedness4}
\modulo{V'(r)}+ cV(r) \geq \per{\cluster{E}}{M\setminus\ball{p}{r}} = \frac{1}{2}\sum_{h=1}^{N}A_h(r).
\end{equation}

Adding inequalities (\ref{Eq:Boundedness3}) and (\ref{Eq:Boundedness4}) furnishes

\begin{equation}\label{Eq:Boundedness5}
3\modulo{V'(r)} \geq  CV(r)^{\frac{n-1}{n}} - 2cV(r) \geq \frac{C}{4}V(r)^{\frac{n-1}{n}}, 
\end{equation}

where $V$ being decreasing ensures the last inequality. Finally, if we suppose that one of the chambers of $\cluster{E}$ is unbounded, we obtain that $V(r)>0$ for all $r$ and hence, by (\ref{Eq:Boundedness5}), we get

$$ \left( V^{\frac{1}{n}} \right)' = \frac{V'}{nV^{\frac{n-1}{n}}} \leq -\frac{C}{12n} < 0. $$

Finally, by the mean value theorem and since $\frac{C}{12n}$ is negative and independent of $r$, the last equation contradicts that $V$ is decreasing and positive. 
\end{proof}

\newpage
\section{Application to the classical existence of isoperimetric clusters}\label{application-classical-results}

To this aim, let us contextualize the classical setting that we mentioned before.

\begin{defi}\label{Def:ConvergeToSpaceForm}
We say that $(M^n, g)$ is $C^0$-locally asymptotically a space form, if it has $C^0$-bounded geometry and for every diverging sequence of points $(p_k)$ we have 

$$ (M, g, p_k) \to (\mathbb{M}_k^n, g_{standard}, x)$$

in the $C^0$-topology, where $\mathbb{M}_k^n$ is a $n$-dimensional space form of curvature $k$ and $x$ is any point in $\mathbb{M}_k^n$.
\end{defi}

\begin{proof}[Proof of Theorem \ref{Theorem:ClassicalExistence}]
By Lemma \ref{Lemma:StructureLemmaForMinimizing}, Theorem \ref{Theorem:Existence} and Definition \ref{Def:ConvergeToSpaceForm}, we get that at most one of the limit chamber's pieces $\cluster{E}^{j}_k(h)$ lives at the limit manifold $\mathbb{M}_k^n$. If all limit chamber's pieces live in $M$, we have nothing to do and the proof is done. However, if one of the limit chamber's pieces lives at $\mathbb{M}_k^n$, we start recalling that 

$$(\cluster{E}_{\infty}(h), g, p_{j\infty}^{h}) = \bigcup_{j=0}^{J}\lim_{k\to+\infty}(\cluster{E}^{j}_k(h), g, p_{jk}^{h}),$$

where the limit is taken in the $C^0$-topology. Set $\lim_{k\to+\infty}(\cluster{E}^{j}_k(h), g, p_{jk}^{h}) = (\cluster{E}^{j}_{\infty}(h), g, p_{j\infty}^{h})$ and suppose that the $\cluster{E}^{J}_{\infty}(h)$ is the one who lives at $\mathbb{M}_{k}^{n}$, for all $h\in\{1,\cdots ,N\}$. Denote the volume of $\cluster{E}^{J}_{\infty}(h)$ by $\v^{J}(h)$. Then, we choose a metric ball $\ball{p_h}{r_h}$ in $M$ with volume $\v^{J}(h)$ and positive distance from $\cup_{j=0}^{J-1}\cluster{E}^{j}_{\infty}(h)$, what we can done thanks to the boundedness of the isoperimetric clusters (Proposition \ref{Prop:BoundednessClusters}). We have to introduce two new $N$-cluster to simplify further equations, even though they will be used only for this proof, define

$$ \cluster{A} = \{ \mathring{\bigcup}_{j=0}^{J-1}\cluster{E}^{j}_{\infty}(h) \}_{h=1}^{N}, $$
$$ \cluster{B} = \{ \cluster{E}^{J}_{\infty}(h) \}_{h=1}^{N}. $$

Recalling that balls are the isoperimetric regions in space forms, therefore we get that

$$ I_M(\v) = \peri{\cluster{E}_{\infty}} = \peri{\cluster{A}} + \peri{\cluster{B}} = \peri{\cluster{A}} + \mathcal{P}_{g_{standard}} (\cluster{B}) \geq $$
$$ \peri{\cluster{A}} + \frac{1}{2}\sum_{h=1}^{N}\tilde{I}_{\mathbb{M}_{k}^{n}}\left(\v^{J}(h)\right)  = \peri{\cluster{A}} + \frac{1}{2}\sum_{h=1}^{N}\mathcal{P}_{g_{standard}}\left(\textbf{B}_{M_{k}^n}(\v^{J}(h))\right )$$
$$\geq^{\ast}  \peri{\cluster{A}} + \frac{1}{2}\sum_{h=1}^{N}\peri{\ball{p_h}{r_h}} = \frac{1}{2}\sum_{h=1}^{N}\peri{\ball{p_h}{r_h}\mathring{\cup}\left(\mathring{\bigcup}_{j=1}^{J-1}\cluster{E}^{j}_{\infty}(h)\right)},$$

where $\tilde{I}_{\mathbb{M}_{k}^{n}}$ denotes the isoperimetric profile of the space form $\mathbb{M}_{k}^{n}$, $\textbf{B}_{M_{k}^n}(\v^{J}(h))$ denotes a ball of volume $\v^{J}(h)$ in $\mathbb{M}_{k}^{n}$, $(\ast)$ is due to equation (2) of the proof of Proposition 3.2 in \cite{mondino2012existence} and Proposition \ref{EquationToPerimeter}. Finally, we set the $N$-cluster $\cluster{E}$ as follows

$$ \cluster{E} = \left \{ \ball{p_h}{r_h}\mathring{\cup}\left(\mathring{\bigcup}_{j=0}^{J-1}\cluster{E}^{j}_{\infty}(h)\right) \right \}_{h=1}^{N} $$
\end{proof}

\newpage
\section{Appendix}\label{appendix}

In this section, we furnish theorems which show how exchanging volume process and volume fixing variations work in Riemannian manifolds with bounded geometry. The reason to write this appendix in this work is that these techniques are vastly used in the regularity theory for clusters in Euclidean spaces. So, it can be useful to further developments of regularity theory for clusters in Riemannian manifolds with bounded geometry and, possibly, more generally, in doubling metric spaces.	

We will fix the following notations for the different types of support $\supp f = \{x\in dom f: f(x) \neq 0\}$ and $\spt f = \{ x\in dom f : f(x)\neq x\}$. We will denote the geodesic balls with the subscript $M$ and the metric balls with the subscript $g$, i.e. $\geoball{x}{r}$ and $\ball{x}{r}$ respectively.

\begin{lemma}[Infinitesimal volume exchange]\label{Lemma:Infinitesimal}
Suppose that $(M^n, g)$ has bounded geometry. Let $\cluster{E}$ be an $N$-cluster in $(M^n, g)$, $y\in\intface{E}{h}{k}$, $ 0 \leq h < k \leq N$ and $\delta >0$. Then there exist $\epsilon_1 \doteq \epsilon_1(\cluster{E}, y, \delta) < inj_M, \epsilon_2 \doteq \epsilon_2(\epsilon_1, n), C_0 \doteq C_0(n, \epsilon_1)$ and a one-paramater family of diffeomorphims $\{\famdif{\phi}\}_{\modulo{t}<\epsilon_1}$ depending on $y$ with for all $\modulo{t}<\epsilon_1$
\begin{equation}\label{eq:infinitesimal0}
\spt\famdif{\phi} \cc \geoball{y}{\epsilon_1} 
\end{equation}
and satisfying the following properties:

\begin{enumerate}[(i):]

\item If $\cluster{E'}$ is an N-cluster, $d_{\mathcal{F}, g}(\cluster{E}, \cluster{E'}) <\epsilon_2$ and $\modulo{t}<\epsilon_1$, then
	\begin{equation}\label{eq:infinitesimal1}
  	 	\bigg\vert  \frac{d}{dt}\vol{\famdif{\phi}(\chamber{E}{h}\cap\geoball{y}{\epsilon_1})} - 1 \bigg\vert < \delta,
    \end{equation}	  
	\begin{equation}\label{eq:infinitesimal2}
  	 	\bigg\vert  \frac{d}{dt}\vol{\famdif{\phi}(\chamber{E}{k}\cap\geoball{y}{\epsilon_1})} + 1 \bigg\vert < \delta,
    \end{equation}	  
    	\begin{equation}\label{eq:infinitesimal3}
  	 	\bigg\vert  \frac{d}{dt}\vol{\famdif{\phi}(\chamber{E}{i}\cap\geoball{y}{\epsilon_1})} \bigg\vert < \delta \ \ \ \ \, i \neq h, k,
    \end{equation}	  
	\begin{equation}\label{eq:infinitesimal4}
  	 	\bigg\vert  \frac{d^2}{dt^2}\vol{\famdif{\phi}(\chamber{E}{i}\cap\geoball{y}{\epsilon_1})} \bigg\vert < C_0 \ \ \ \ \, 1 \leq i \leq N.
    \end{equation}	  
    
\item If $\Sigma$ is an $\hm$-rectifiable set in $(M, g)$ and $\modulo{t}<\epsilon_1$, then
	
	\begin{equation}\label{eq:infinitesimal5}
		\bigg\vert \hmde{\famdif{\phi}(\Sigma)} - \hmde{\Sigma} \bigg\vert \leq C_0 \hmde{\Sigma}\modulo{t}.
	\end{equation}

\end{enumerate}
\end{lemma}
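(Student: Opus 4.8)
The plan is to localize near the interface point $y \in \intface{E}{h}{k}$ and build the deformation explicitly from a single normal vector field, cut off to have compact support in a small geodesic ball. First I would choose $\epsilon_1 < inj_M$ small enough that $\geoball{y}{2\epsilon_1}$ sits inside a normal coordinate chart where the metric is uniformly close to the Euclidean one (here the bounded geometry hypothesis enters: it gives uniform control on the metric coefficients and their derivatives on balls of radius comparable to $inj_M$, hence uniform bounds on the constants $C_0$, $\epsilon_2$). In that chart, since $y$ belongs to the reduced boundaries $\frored{\chamber{E}{h}}$ and $\frored{\chamber{E}{k}}$ with opposite inner normals (equation (\ref{normalinterfaces})), De Giorgi's blow-up tells us that near $y$ the chambers $\chamber{E}{h}$ and $\chamber{E}{k}$ fill, up to volume $o(r^n)$, the two half-balls determined by the hyperplane orthogonal to $\nu \doteq \nu_{\chamber{E}{h}}(y)$, while every other chamber $\chamber{E}{i}$ has density zero at $y$ (equation (\ref{density0})).

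Next I would fix a radial cutoff $\chi \in C_c^\infty(\geoball{y}{\epsilon_1})$ with $\chi \equiv 1$ on $\geoball{y}{\epsilon_1/2}$, set $X = \chi \cdot \nu$ (extending $\nu$ as the constant coordinate vector in the chart), and let $\famdif{\phi}$ be the flow of $X$; this gives (\ref{eq:infinitesimal0}) immediately. The first variation of volume under this flow is $\frac{d}{dt}\big|_{t=0}\vol{\famdif{\phi}(\chamber{E}{i}\cap\geoball{y}{\epsilon_1})} = \int_{\frored{\chamber{E}{i}}\cap\geoball{y}{\epsilon_1}} X \cdot \nu_{\chamber{E}{i}}\, d\hm$; evaluated at $t=0$, using the blow-up picture this integral is $\hm$-approximately $\pm\,(\text{area of the flat interface disk in the chart})$ for $i=h,k$ and approximately $0$ for $i\neq h,k$. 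After normalizing $\nu$ and the cutoff so that the leading term equals $1$ (rescaling $X$ by a constant), the estimates (\ref{eq:infinitesimal1})–(\ref{eq:infinitesimal3}) at $t=0$ follow by making $\epsilon_1$ small; for nonzero $|t|<\epsilon_1$ and nearby clusters $\cluster{E'}$ with $d_{\mathcal{F},g}(\cluster{E},\cluster{E'})<\epsilon_2$, one uses that $\famdif{\phi}$ is $C^1$-close to the identity and that the volume of $\famdif{\phi}(\chamber{E'}{i}\cap\geoball{y}{\epsilon_1})$ depends continuously (with modulus controlled by $\epsilon_2$ and by $|t|$) on the data; shrinking $\epsilon_2$ absorbs the discrepancy into $\delta$. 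The second-derivative bound (\ref{eq:infinitesimal4}) and the area-distortion bound (\ref{eq:infinitesimal5}) are then soft: $\frac{d^2}{dt^2}\vol{\famdif{\phi}(\cdot)}$ is a bounded integral of $\mathrm{div}$-type quantities in $X$ and $\nabla X$ over $\geoball{y}{\epsilon_1}$, and the area formula for Lipschitz maps between rectifiable sets gives $|\hmde{\famdif{\phi}(\Sigma)}-\hmde{\Sigma}| \leq (\mathrm{Lip}(J\famdif{\phi})\,|t|)\,\hmde{\Sigma}$ with the Jacobian bound coming from $\|\nabla X\|_\infty < \infty$; collecting all these constants into a single $C_0 = C_0(n,\epsilon_1)$ closes the argument.

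The main obstacle I expect is the uniformity in the perturbed cluster $\cluster{E'}$: the family $\{\famdif{\phi}\}$ is constructed once and for all from $\cluster{E}$ and $y$, yet (\ref{eq:infinitesimal1})–(\ref{eq:infinitesimal3}) must hold for every $\cluster{E'}$ flat-close to $\cluster{E}$, and the first variation $\frac{d}{dt}\vol{\famdif{\phi}(\chamber{E'}{i}\cap\geoball{y}{\epsilon_1})}$ is an integral over $\frored{\chamber{E'}{i}}$, about whose fine structure we know nothing near $y$. The way around this is to compute the volume derivative not via the boundary integral but via the Jacobian of $\famdif{\phi}$ pulled back on $\chamber{E'}{i}$, i.e. $\frac{d}{dt}\vol{\famdif{\phi}(\chamber{E'}{i}\cap\geoball{y}{\epsilon_1})} = \int_{\chamber{E'}{i}\cap\geoball{y}{\epsilon_1}} \mathrm{div}_g\big(X\circ\famdif{\phi}^{-1}\big)\circ\famdif{\phi}\cdot J\famdif{\phi}\, dv_g$, which is a volume integral depending on $\chamber{E'}{i}$ only through $\chi_{\chamber{E'}{i}}$ in $L^1$; then $|d_{\mathcal{F},g}(\cluster{E},\cluster{E'})|<\epsilon_2$ directly bounds the difference from the corresponding quantity for $\cluster{E}$, and the $\cluster{E}$-quantity is handled by the blow-up as above. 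This is the key technical point; everything else is bookkeeping of constants, all of which are uniform by bounded geometry.
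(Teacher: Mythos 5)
Your construction — flow of a vector field pointing in the interface normal direction, compactly supported in a small geodesic ball around $y$, normalized so the leading first-variation coefficient is $1$, with the $\cluster{E'}$-uniformity handled by rewriting the volume derivative as a bulk (Jacobian/divergence) integral that is $L^1$-continuous in the chamber — is essentially the same proof as the paper's, which builds the field as $\xi(p)=i_y^p\bigl(v_\epsilon(Q_\nu(\exp_y^{-1}p))\,\nu\bigr)$ with the scaling $v_\epsilon=\epsilon^{1-n}v(\cdot/\epsilon)$ playing the role of your rescaling constant, verifies $(i)$ at $\cluster{E'}=\cluster{E}$, $t=0$ via the first-variation formula together with the interface density properties, and gets $(ii)$ and the second-derivative bound from the area formula and Jacobian estimates. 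The only substantive difference is that you spell out the reduction to $(\cluster{E},t=0)$ (which the paper dismisses as ``straightforward calculation''), and your extension of $\nu$ as a constant coordinate vector is a cleaner and better-defined choice than the paper's isometry $i_y^p$ constrained to send $\nu_{\chamber{E}{h}}(y)$ to $\nu_{\chamber{E}{h}}(p)$.
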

\begin{proof}
First of all we will construct a vector field in $\R^n$ which modifies sets inside a small ball. To this end, we choose a function $v\in C^{\infty}_c(\textbf{B}_{\euc{n}}(0, 1))$ such that, for all $\epsilon > 0$ if we define $v_\epsilon(x) = \epsilon^{1-n}v(x/\epsilon)$, then we have $v_\epsilon \in C^{\infty}_c(\textbf{B}_{\euc{n}}(0, \epsilon))$ and $\norm{v_\epsilon}_{\infty} \leq \epsilon^{-n}C(n)$. Let us denote by $Q_\nu$ the orthogonal transformation which carries $\nu\in\sphere{n-1}_{\R^n}$ to $e_n$ (n-th vector of $\R^n$'s basis). Then, we can define $T\doteq T[\epsilon, \nu]\in C^{\infty}_{c}(\textbf{B}_{\euc{n}}(0, \epsilon), \R^{n})$ as follows

$$ T(x) = v_\epsilon(Q_\nu(x))\nu $$.

It is straighforward to see that this notions can be extended for any vector space, let us now apply this construction to $M$. For any $0<\epsilon < inj_M, y\in M$ and $\nu\in\sphere{n-1}_{T_{y}M}$, we fix $T \doteq T[y, \epsilon, \nu]$ and define $\xi \doteq \xi[y, \epsilon, \nu]\in\mathfrak{X}^{\infty}_{c}(M)$ with $\supp\xi\cc\geoball{y}{\epsilon}$ as follows

$$ \xi(p) = i_y^p \biggl( T[y, \epsilon, \nu]\left(exp^{-1}_{y}(p)\right) \biggr), $$

where $i_y^p : T_yM \to T_pM$ is any linear isometry. We now take the flow of $\xi$ to be the one paramater family of diffeomorphims $\{\famdif{\phi}\}_{\modulo{t}<\epsilon}$, it is clear that $\spt\famdif{\phi}\cc\geoball{y}{\epsilon}$. Let us start verifying $(ii)$, by the area formula for rectifiable sets we have that for all $\Sigma\subset M$ $\hm$-rectifiable

\begin{equation}\label{infinitesimal1}
\hmde{\famdif{\phi}(\Sigma)} = \int_{\Sigma}J^{\Sigma}\famdif{\phi}\diff\hm
\end{equation}

for any $\modulo{t} < \epsilon < inj_M$. Since $\varphi_{0} = id$ and $(t, p) \mapsto \famdif{\phi}$ is smooth in $(-\epsilon, \epsilon)\times M$, we ensure the existence of  $\epsilon_1 \doteq \epsilon_1(n, y, \epsilon)$, $C_0' \doteq C_0'(n, y, \epsilon)$ such that, for $\epsilon$ sufficiently small,

\begin{equation}\label{infinitesimal2}
\bigg\vert J^{\Sigma}\famdif{\phi} - 1\bigg\vert\leq C_0'\modulo{t},
\end{equation}

$\hm$-a.e. in $\Sigma$ and thus $(ii)$ follows directly from (\ref{infinitesimal1}) and (\ref{infinitesimal2}). By (\ref{eq:infinitesimal0}), we get $C' \doteq C'(n, y, \epsilon)$ such that

\begin{equation}\label{infinitesimal0}
\sup_{M}\biggl( \frac{d}{dt}J\famdif{\phi} + \frac{d^2}{dt^2}J\famdif{\phi}\biggr) \leq C'.
\end{equation}

Applying the area formula for $\chamber{E}{i}, 0\leq i\leq N,$ and (\ref{infinitesimal0}), we obtain

$$ \bigg\vert  \frac{d^2}{dt^2}\vol{\famdif{\phi}(\chamber{E}{i}\cap\geoball{y}{\epsilon})} \bigg\vert \leq C'\vol{\geoball{y}{\epsilon}}, $$

what conclude the proof of (\ref{eq:infinitesimal4}). It is straightforward calcultion to see that it suffices to prove (\ref{eq:infinitesimal1}), (\ref{eq:infinitesimal2}) and (\ref{eq:infinitesimal3}) for $\cluster{E'} = \cluster{E}$ and $t=0$. By the first variation of volume (one can consult Proposition 17.8 in \cite{MaggiBook2012} and adapt the proof for the context of Riemannian manifolds with bounded geometry), we obtain that

\begin{equation}\label{infinitesimal3}
\frac{d}{dt}\bigg\vert_{t=0}\vol{\famdif{\phi}(E\cap\geoball{y}{\epsilon})} = \int_{\geoball{y}{\epsilon}\cap \frored{E}}\prodin{\xi}{\nu_E}\diff\hm
\end{equation}

for any finite perimeter set $E$. So, let us use (\ref{infinitesimal3}) with $E=\chamber{E}{i}$, $\xi = \xi[y, \epsilon, \nu_{\chamber{E}{h}}(y)]$ and $i_y^p$ such that $i_y^p(\nu_{\chamber{E}{h}}(y)) = \nu_{\chamber{E}{h}}(p)$, then it turns 

\begin{equation}\label{infinitesimal4}
\frac{d}{dt}\bigg\vert_{t=0}\vol{\famdif{\phi}(\chamber{E}{i}\cap\geoball{y}{\epsilon})} = \int_{\geoball{y}{\epsilon}\cap\frored{\chamber{E}{i}}}\epsilon^{1-n}v\biggl(\frac{1}{\epsilon}Q_{\nu_{\chamber{E}{h}}(y)}(exp^{-1}_{y}(p))\biggr)\prodin{\nu_{\chamber{E}{h}}(p)}{\nu_{\chamber{E}{i}}(p)}\diff\hm(p).
\end{equation}

Taking $i \neq h, k$, we immediately have that

\begin{equation}\label{infinitesimal5}
\frac{d}{dt}\bigg\vert_{t=0}\vol{\famdif{\phi}(\chamber{E}{i}\cap\geoball{y}{\epsilon})} \leq \left(\sup_{\geoball{y}{\epsilon}}\modulo{v} \right)\frac{\per{\chamber{E}{i}}{\geoball{y}{\epsilon}}}{\epsilon^{n-1}}.
\end{equation}

From (\ref{density0'}) follows (\ref{eq:infinitesimal3}) for any $\epsilon$ sufficiently small depending on $\cluster{E}, y$ and $\delta$. Now, we set $ i = h $ and then (\ref{infinitesimal3}) furnishes 

\begin{equation}\label{infinitesimal16}
\frac{d}{dt}\bigg\vert_{t=0}\vol{\famdif{\phi}(\chamber{E}{i}\cap\geoball{y}{\epsilon})} = \int_{\geoball{y}{\epsilon}\cap\frored{\chamber{E}{h}}}v_\epsilon\biggl(Q_{\nu_{\chamber{E}{h}}(y)}(exp^{-1}_{y}(p))\biggr)\diff\hm(p).
\end{equation}

Analogously, using (\ref{normalinterfaces}) we get

\begin{equation}\label{infinitesimal17}
\frac{d}{dt}\bigg\vert_{t=0}\vol{\famdif{\phi}(\chamber{E}{i}\cap\geoball{y}{\epsilon})} = -\int_{\geoball{y}{\epsilon}\cap\frored{\chamber{E}{k}}}v_\epsilon\biggl(Q_{\nu_{\chamber{E}{h}}(y)}(exp^{-1}_{y}(p))\biggr)\diff\hm(p).
\end{equation}

In order to obtain (\ref{eq:infinitesimal1}) and (\ref{eq:infinitesimal2}) from (\ref{infinitesimal16}) and (\ref{infinitesimal17}), it remains to choose a function $v$ at the beginning also satisfying $\lim_{\epsilon\to 0}v_\epsilon (0) = 1$.

\end{proof}

For the next result, we will use the notation

$$ V = \biggl \{ \v \in \R^{N+1} : \sum_{h=0}^{N}\v(h) = 0 \biggr \} \subset \R^{N+1}.$$

\begin{Res}[Volume fixing variation]\label{Theorem:VolumeFixingVariation}
Suppose that $(M^n, g)$ has bounded geometry. For all $N$-cluster $\cluster{E}$ in $(M^n, g)$, $y\in\intface{E}{h}{k}$, there exist positive constants $\eta, \epsilon_1, \epsilon_2, C_1$ and $R$ such that for all $N$-cluster $\cluster{E'}$ with $d_{\mathcal{F}, g}(\cluster{E}, \cluster{E'})<\epsilon_2$ there exists a function,

$$ \Psi: \biggl( (-\eta, \eta)^{N+1}\cap V\biggr)\times M \to M, $$

of class $C^1$ with the following properties for $\v \in (-\eta, \eta)^{N+1}\cap V$ :

\begin{enumerate}[(i):]

	\item $\Psi(\v, \cdot)$ is a diffeomorphism with relatively compact support,
	
	\item it holds 
	
	$$ \vol{\Psi (\v, \chamber{E'}{h})\cap\spt \Psi} = \vol{\chamber{E'}{h}\cap\spt\Psi} + \v(h) \ \ \ \, 0 \leq h \leq N,$$

	\item $\Sigma$ is a $\hm$-rectifiable set in $(M, g)$, then
	
	$$ \biggl | \hmde{ \Psi(\v, \Sigma )} - \hmde{\Sigma}\biggr | \leq C_1 \hmde{\Sigma}\sum_{h=0}^{N}\modulo{\v(h)}, $$
	
	\item there exist $M\in [N, 2N^2]\cap\mathbb{N}$ and a family $\{y_{\alpha}\}_{\alpha=1}^{M}$ of interface points of $\cluster{E}$ with $\modulo{y_\alpha - y_\beta} > 4\epsilon_1$ for $1\leq\alpha <\beta\leq M$ such that 
	
	$$spt\Psi (\v, \cdot ) \subset \bigcup_{\alpha=1}^{M}\geoball{y_\alpha}{\epsilon_1}, $$
	
	\item if $\{ z_{\alpha} \}_{\alpha=1}^{M}$ is another family of interface points of $\cluster{E}$ with $\modulo{z_\alpha - z_\beta} > 4\epsilon_1$ for $1\leq \alpha < \beta \leq M$ and $y_\alpha$ and $z_\alpha$ belongs to the same interface for $1\leq \alpha \leq M$, then there exists $\eta ', \epsilon_1 ', \epsilon_2 ', C_1 '$ and $R'$ such that the conclusions of this theorem is validated for the new constants and $z_\alpha$.

\end{enumerate}
 
\end{Res}
\begin{proof}
The proof goes mutatis mutandis as in the proof of Theorem 29.14 in \cite{MaggiBook2012} using the preceding result (Lemma \ref{Lemma:Infinitesimal}).
\end{proof}

      \newpage
      \markboth{References}{References}
      \bibliographystyle{alpha}
      \bibliography{OptimalClusters}

\end{document}